\def\@setcopyright{}
\def\serieslogo@{}
\newcommand{\Complex}{\mathbb C}
\newcommand{\Real}{\mathbb R}
\newcommand{\N}{\mathbb N}
\newcommand{\ddbar}{\overline\partial}
\newcommand{\pr}{\partial}
\newcommand{\ol}{\overline}
\newcommand{\Td}{\widetilde}
\newcommand{\norm}[1]{\left\Vert#1\right\Vert}
\newcommand{\abs}[1]{\left\vert#1\right\vert}
\newcommand{\set}[1]{\left\{#1\right\}}
\newcommand{\To}{\rightarrow}
\DeclareMathOperator{\Ker}{Ker}
\theoremstyle{plain}
\newtheorem{theorem}{Theorem}[section]
\newtheorem{lemma}[theorem]{Lemma}
\newtheorem{proposition}[theorem]{Proposition}
\newtheorem{corollary}[theorem]{Corollary}
\newtheorem{definition}[theorem]{Definition}
\theoremstyle{definition}
\theoremstyle{remark}
\newtheorem{remark}[theorem]{Remark}
\numberwithin{equation}{section}
\begin{document}

\title[Morse inequalities on CR covering manifolds with $S^1$-action]
{Morse inequalities for Fourier components of Kohn-Rossi cohomology of CR covering manifolds with $S^1$-action}

\author{Rung-Tzung Huang}
\address{Department of Mathematics, National Central University, Chung-Li 320, Taiwan}
\email{rthuang@math.ncu.edu.tw}

\author{Guokuan Shao}
\address{School of Mathematics (Zhuhai), Sun Yat-sen University, Zhuhai 519082, Guangdong, China}
\thanks{
The first author was supported by Taiwan Ministry of Science and Technology project 107-2115-M-008-007-MY2. Both authors would like to express their gratitude to Prof. Chin-Yu Hsiao for very helpful comments in this work.}
\email{shaogk@mail.sysu.edu.cn}

\keywords{Kohn-Rossi cohomology, heat kernel, CR manifold} 
\subjclass[2010]{58J35, 32V20}

\begin{abstract}
Let $X$ be a compact connected CR manifold of dimension $2n+1, n \geq 1$. Let $\widetilde{X}$ be a paracompact CR manifold with a transversal CR $S^1$-action, such that there is a discrete group $\Gamma$ acting freely on $\widetilde{X}$ having $X \, = \, \widetilde{X}/\Gamma$. Based on an asymptotic formula for the Fourier components of the heat kernel with respect to the $S^1$-action, we establish the Morse inequalities for Fourier components of reduced $L^2$-Kohn-Rossi cohomology with values in a rigid CR vector bundle over $\widetilde{X}$. As a corollary, we obtain the Morse inequalities for Fourier components of Kohn-Rossi cohomology on $X$ which were obtained by Hsiao-Li \cite{HsiaoLi15} by using Szeg\"{o} kernel method. %We give a proof by the heat kernel method.
\end{abstract}

\maketitle
%%%%%%%%%%%%%%%%%%%%%%%%%%%%%%%%%%%%%%%%%%%%%%%%%%

\section{Introduction and statement of the results}

Gromov-Henkin-Shubin \cite[Theorem 0.2]{GHS98} considered covering manifolds that are strongly pseudoconvex of complex manifolds and analyzed the holomorphic $L^2$-functions on the coverings. Todor-Chiose-Marinescu \cite{TCM} generalized in a similar manner the Morse inequalities of Siu-Demailly \cite{S84, D85} on coverings of complex manifolds, they also considered coverings of
weakly pseudoconvex domains in \cite{MTC}.
 The study of problems on CR manifolds with $S^{1}$-action becomes active recently, see \cite{CHT, Hsiao14, HH16, HL1, HsiaoLi15,HS} and the references therein. In particular, Hsiao-Li \cite{HsiaoLi15} established the Morse inequalities for Fourier components of Kohn-Rossi cohomology on $X$ by using the Szeg\"{o} kernel method. In \cite{HM} general Morse inequalities
for CR bundles are proved, generalizing \cite{Ge}.
Inspired by the results of \cite{GHS98, HsiaoLi15, TCM, S84, D85}, we establish Morse inequalities for Fourier components of reduced $L^2$-Kohn-Rossi cohomology with values in a rigid CR vector bundle on a covering manifold over a compact connected CR manifold with $S^{1}$-action. This generalizes the results of \cite{HsiaoLi15} to CR covering manifolds with $S^1$-action. We present a proof by the heat kernel method, which is inspired by Bismut's proof \cite{B, MM} of the holomorphic Morse inequalities. The crucial estimate for Fourier components of the heat kernel of Kohn Laplacians was given in \cite{HH16}.

Now we formulate the main results. We refer to other sections for notations and definitions (see Definition \ref{d-gue160502}, \ref{d-gue150508f}, \ref{d-gue150508ff}, \ref{d-gue150508dI} and \eqref{e-KR}, \eqref{e-L2KR}) used here. Let $X$ be a compact connected CR manifold of dimension $2n+1, n \geq 1$ with a transversal CR $S^1$-action $e^{i\theta}$ on $X$.
For $x\in X$, we say that the period of $x$ is $\frac{2\pi}{\ell}$, $\ell\in\mathbb N$, if $e^{i\theta}\circ x\neq x$, for every $0<\theta<\frac{2\pi}{\ell}$, and $e^{i\frac{2\pi}{\ell}}\circ x=x$. For each $\ell\in\mathbb N$, put 
\begin{equation}\label{e-gue150802bm}
X_\ell=\set{x\in X;\, \mbox{the period of $x$ is $\frac{2\pi}{\ell}$}}
\end{equation} 
and let 
\begin{equation}\label{e-gue15080202}
p=\min\set{\ell\in\mathbb N;\, X_\ell\neq\emptyset}.
\end{equation}
It is well-known that if $X$ is connected, then $X_p$ is an open and dense subset of $X$ (see Duistermaat-Heckman~\cite[p.265]{Du82}). Assume $X \, = \, X_{p_1} \cup X_{p_2} \cup \cdots \cup X_{p_k},$ $p=: p_1 < p_2 < \cdots < p_k.$
Set $X_{{\rm reg\,}}:=X_{p}$. 
We call $x\in X_{{\rm reg\,}}$ a regular point of the $S^1$ action. Let $X_{{\rm sing\,}}$ be the complement of $X_{{\rm reg\,}}$.

Let $\widetilde{X}$ be a paracompact CR manifold, such that there is a discrete group $\Gamma$ acting freely on $\widetilde{X}$ having $X \, = \, \widetilde{X}/\Gamma$. Let $\pi \, : \, \widetilde{X} \to X$ be the natural projection with the pull-back map $\pi^* : TX \to T\widetilde{X}$. Then $\widetilde{X}$ admits a pull-back CR structure $T^{1,0}\widetilde{X} := \pi^*T^{1,0}X$ and, hence, a CR manifold. We assume that $\widetilde{X}$ admits a transversal CR locally free $S^1$ action, denote by $e^{i\theta}$. We further assume that the map 
 $$
 \Gamma \times \widetilde{X} \to \widetilde{X}, \  (\gamma, \widetilde{x}) \mapsto \gamma \circ \widetilde{x}, \quad \forall \widetilde{x} \in \widetilde{X}, \quad \forall \gamma \in \Gamma.
 $$
 is CR, see \eqref{E:crmap}, and 
 $$
 e^{i\theta} \circ \gamma \circ \widetilde{x} \, = \, \gamma \circ e^{i\theta} \circ \widetilde{x}, \quad \forall \widetilde{x} \in \widetilde{X}, \quad \forall \theta \in [0, 2\pi[, \quad \forall \gamma \in \Gamma.
 $$ 
 Let $\widetilde{E}:=\pi^* E$ be the pull-pack bundle of a rigid CR vector bundle $E$ over $X$. Then $\widetilde{E}$ is a $\Gamma$-invariant rigid CR vector bundle over $\widetilde{X}$. We denote by $\widetilde{X}_{{\rm reg\,}}$ the set of regular points of the $S^1$-action on $\widetilde{X}$. Note that since $\Gamma$ acts on $\widetilde{X}$ freely so that $\widetilde{X}/\Gamma = X$, hence, we have $\widetilde{X}_{{\rm reg\,}}/\Gamma \, = \, X_{{\rm reg\,}} \, = \, X_p$. We denote by $X(q)$ a subset of $X$ such that 
 \[
 X(q) \, := \, \left\{ x \in X : \text{ $\mathcal{L}_x$ has exactly $q$ negative eigenvalues and $n-q$ positive eigenvalues} \right\}.
 \]
We refer to Section \ref{S:prelim} for more details.
Our main theorem is the following
\begin{theorem}\label{t-01covering}

	With the above notations and assumptions, 
	as $m \to \infty$, for $q=0, 1, \cdots, n,$ the $m$-th Fourier components of reduced $L^2$-Kohn-Rossi cohomology (see \eqref{e-L2KR}) satisfy the following strong Morse inequalities
	\begin{equation}\label{e-101covering}
	\begin{split}
	&\sum_{j=0}^q(-1)^{q-j}\dim_\Gamma \overline{H}^{j}_{b, (2), m}(\widetilde{X},\widetilde{E})\\
	&\, \le \, \frac{prm^{n}}{2\pi^{n+1}} \sum_{j=0}^q (-1)^{q-j} \int_{X(j)} \left| \det(\mathcal{L}_{x}) \right| dv_{X}(x)
	+o(m^{n}), \ \ \text{for} \ \  p\mid m,\\
	&\sum_{j=0}^{q}(-1)^{q-j}\dim_\Gamma \overline{H}^{j}_{b, (2), m}(\widetilde{X},\widetilde{E})=o(m^{n}), \ \ \text{for} \ \  p\nmid m.
	\end{split}
	\end{equation}
	where $r$ denotes the rank of $\widetilde{E}$, $\dim_\Gamma$ denotes the Von Neumann dimension (see \S \ref{SS:covering} in the below, \cite[\S 3.6.1]{MM} or \cite[\S 3]{A76}) and $\mathcal{L}_{x}$ is the Levi form at $x\in X$.
	When $p\mid m$, $q=n$, as $m \to \infty$, we have the asymptotic Riemann-Roch-Hirzebruch theorem
	\begin{equation}\label{e-102covering1}
	\sum_{j=0}^n(-1)^{j}\dim_\Gamma \overline{H}^{j}_{b, (2), m}(\widetilde{X},\widetilde{E})
	\, = \, \frac{prm^{n}}{2\pi^{n+1}} \sum_{j=0}^n (-1)^{j} \int_{X(j)} \left| \det(\mathcal{L}_{x}) \right| dv_{X}(x)
	+o(m^{n}). 
	\end{equation}
	In particular, we get the weak Morse inequalities
	\begin{equation}\label{e-102covering}
	\dim_\Gamma \overline{H}^{q}_{b, (2), m}(\widetilde{X},\widetilde{E})
	\, \le \, \frac{prm^{n}}{2\pi^{n+1}}\int_{X(q)} \left| \det(\mathcal{L}_{x}) \right| dv_{X}(x)
	+o(m^{n}).
	\end{equation}
       \end{theorem}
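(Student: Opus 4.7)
The plan is to adapt Bismut's heat-kernel proof of the holomorphic Morse inequalities \cite{B, MM} to the present CR $\Gamma$-covering setting, with the analytic input being the asymptotic formula for Fourier components of the heat kernel of the Kohn Laplacian established in \cite{HH16}. Let $\Box^{(q)}_{b,m}$ denote the $m$-th Fourier component of the Kohn Laplacian acting on $\widetilde{E}$-valued $(0,q)$-forms on $\widetilde{X}$. By $L^{2}$-Hodge theory on the $\Gamma$-Hilbert module of forms and the isometry of the $S^{1}$-Fourier decomposition, the reduced cohomology $\overline{H}^{q}_{b,(2),m}(\widetilde{X},\widetilde{E})$ is $\Gamma$-isomorphic to $\ker\Box^{(q)}_{b,m}$, and its Von Neumann dimension equals the $\Gamma$-trace of the orthogonal projection onto this kernel, evaluated by integrating the Schwartz kernel along the diagonal over a $\Gamma$-fundamental domain $\mathcal{F}\subset\widetilde{X}$ which projects diffeomorphically onto $X$.

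The algebraic skeleton is the classical alternating-trace inequality: for every $u>0$ and every $0\le q\le n$,
\begin{equation*}
\sum_{j=0}^{q}(-1)^{q-j}\dim_{\Gamma}\overline{H}^{j}_{b,(2),m}(\widetilde{X},\widetilde{E})\le\sum_{j=0}^{q}(-1)^{q-j}\mathrm{Tr}_{\Gamma}\bigl[\exp(-u\Box^{(j)}_{b,m})\bigr],
\end{equation*}
with equality when $q=n$. This follows from the spectral decomposition of $\ddbar_{b}+\ddbar_{b}^{*}$: consecutive nonzero eigenspaces of $\Box^{(j)}_{b,m}$ are intertwined isomorphically by $\ddbar_{b}$ and $\ddbar_{b}^{*}$, and the resulting acyclicity produces a nonnegative remainder in the truncated alternating sum, a standard spectral computation on Hilbert $\Gamma$-modules as in \cite[Ch.~3]{MM}. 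The $\Gamma$-trace is defined by integrating the pointwise trace of the heat Schwartz kernel on the diagonal against $\mathcal{F}$, and is finite for every $u>0$.

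After substituting $u\mapsto u/m$, the next step is to invoke the uniform pointwise asymptotic of \cite{HH16} for the Schwartz kernel of $\exp(-\tfrac{u}{m}\Box^{(j)}_{b,m})$ along the diagonal. At a regular point $\widetilde{x}\in\widetilde{X}_{\rm reg}$ lying above $x\in X_{p}$, the leading term takes the form $\frac{prm^{n}}{2\pi^{n+1}}$ times an explicit Mehler-type function of the eigenvalues of $\mathcal{L}_{x}$ and of $u$, while on $\widetilde{X}_{\rm sing}$ the trace is $o(m^{n})$ uniformly. Letting $u\to\infty$ inside the alternating sum collapses the leading integrand on the regular locus to $\mathbf{1}_{X(j)}(x)\,|\det\mathcal{L}_{x}|$, exactly as in the compact case of \cite{HsiaoLi15} and the holomorphic case of \cite{MM}. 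When $p\nmid m$, the $S^{1}$-Fourier resonance forces the leading coefficient of the heat trace asymptotic to vanish, producing the $o(m^{n})$ bound on the second line of \eqref{e-101covering}. Integrating the pointwise asymptotic over $\mathcal{F}$ and assembling the alternating sums gives \eqref{e-101covering}; the equality case at $q=n$ yields the asymptotic Riemann-Roch-Hirzebruch formula \eqref{e-102covering1}; and specializing \eqref{e-101covering} at consecutive $q$ and $q-1$ produces the weak bound \eqref{e-102covering}.

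The principal obstacle is the uniform control of the heat-kernel asymptotic on the noncompact cover $\widetilde{X}$, uniformly in a neighborhood of the singular stratum $\widetilde{X}_{\rm sing}$ and uniformly as $m\to\infty$ and $u\to\infty$. I expect this to be handled by finite-propagation/localization: the off-diagonal Gaussian decay of $\exp(-\tfrac{u}{m}\Box^{(j)}_{b,m})$ reduces the computation of the trace kernel at $\widetilde{x}$ to a ball of radius $O(\sqrt{u/m})$, which for $m$ large lifts into a single sheet of $\widetilde{X}\to X$ inside a trivializing chart, rendering $\Gamma$-periodicity irrelevant at the local level and reducing the analysis to the downstairs chart on $X$. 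The stratification $X=\bigsqcup_{j}X_{p_{j}}$ together with the fact that $X_{\rm sing}$ has measure zero in $X$ then shows that the singular contributions are negligible after integration over $\mathcal{F}$. The remainder of the argument is the Bismut-Demailly bookkeeping applied $\Gamma$-equivariantly; specialization to $\Gamma=\{e\}$ recovers \cite[Main Theorem]{HsiaoLi15} as a corollary.
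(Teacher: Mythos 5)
Your proposal is correct and follows essentially the same route as the paper: the truncated alternating $\Gamma$-trace inequality with equality at $q=n$ (the paper's Lemma \ref{l:3.6.6}), the $\Gamma$-trace expressed as a diagonal integral over a fundamental domain (Proposition \ref{p-01c}), the lifted on-diagonal heat kernel asymptotics from \cite{HH16} with the Gaussian off-diagonal/localization control (Theorems \ref{t-gue150630I}--\ref{C:3.6.5}), the limit $t\to\infty$ collapsing the Mehler factor to $(-1)^{q}\mathbf{1}_{X(q)}\left|\det\mathcal{L}_x\right|$, and the Fourier sum $\sum_{s=1}^{p}e^{2\pi(s-1)mi/p}$ accounting for the dichotomy $p\mid m$ versus $p\nmid m$. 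No essential differences.
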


By the standard argument in \cite{HsiaoLi15} or \cite{HHLS}, we deduce easily the following Grauert-Riemenschneider criterion on coverings of CR manifolds.
\begin{corollary}\label{c-912}
With the above notations and assumptions in Theorem \ref{t-01covering}, we assume also that $X$ is weakly pseudoconvex and strongly pseudoconvex at a point. Then
\begin{equation}\label{e-912}
\dim_\Gamma \overline{H}^{0}_{b, (2), m}(\widetilde{X},\widetilde{E})\approx m^n,
\ \ \text{for} \ \ p\mid m.
\end{equation}
In particular, $\dim_\Gamma \overline{H}^{0}_{b, (2)}(\widetilde{X},\widetilde{E})=\infty$.
\end{corollary}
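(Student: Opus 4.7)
The plan is to feed the two pseudoconvexity hypotheses directly into the weak Morse inequality \eqref{e-102covering} and the asymptotic Riemann--Roch--Hirzebruch formula \eqref{e-102covering1} supplied by Theorem~\ref{t-01covering}; I expect the argument to be a routine specialization of the standard Grauert--Riemenschneider recipe used in \cite{HsiaoLi15, HHLS}.

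First, weak pseudoconvexity of $X$ means that the Levi form $\mathcal{L}_x$ is positive semidefinite at every $x \in X$, so it has no negative eigenvalues. Hence $X(q) = \emptyset$ for every $q \ge 1$, while $X(0)$ contains the (nonempty) open set on which $X$ is strongly pseudoconvex. Substituting $X(q) = \emptyset$ into \eqref{e-102covering} yields
\[
\dim_\Gamma \overline{H}^{q}_{b,(2),m}(\widetilde{X},\widetilde{E}) = o(m^n), \qquad q = 1, \dots, n, \quad p \mid m.
\]

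Next, I plug these vanishing estimates into the Riemann--Roch--Hirzebruch asymptotic \eqref{e-102covering1}. On the right-hand side only the $j=0$ term survives (every other $X(j)$ is empty); on the left-hand side only the $j=0$ contribution can carry size $m^n$, the remaining alternating terms being $o(m^n)$. This produces the two-sided identity
\[
\dim_\Gamma \overline{H}^{0}_{b,(2),m}(\widetilde{X},\widetilde{E}) = \frac{prm^n}{2\pi^{n+1}} \int_{X(0)} |\det \mathcal{L}_x|\, dv_X(x) + o(m^n), \qquad p \mid m.
\]

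Finally, strong pseudoconvexity at some point $x_0 \in X$ ensures, by continuity of $\mathcal{L}_x$, that $\mathcal{L}_x$ is positive definite on an open neighborhood of $x_0$, which therefore lies in $X(0)$ and on which $|\det \mathcal{L}_x| > 0$. Consequently the integral above is strictly positive, and we obtain $\dim_\Gamma \overline{H}^{0}_{b,(2),m}(\widetilde{X},\widetilde{E}) \approx m^n$ for $p \mid m$, which is \eqref{e-912}. The last assertion then follows from the Fourier decomposition of $\overline{H}^{0}_{b,(2)}(\widetilde{X},\widetilde{E})$ into its $m$-th components: infinitely many of them (namely those with $p \mid m$) already have $\Gamma$-dimension bounded below by a positive multiple of $m^n$, so the total $\Gamma$-dimension is infinite. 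I do not foresee any genuine obstacle once Theorem~\ref{t-01covering} is available; the only mild care concerns ensuring the open set of strong pseudoconvexity contributes non-trivially to the integral, which is automatic.
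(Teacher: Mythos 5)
Your argument is correct and is exactly the ``standard argument'' the paper invokes by reference to \cite{HsiaoLi15, HHLS}: weak pseudoconvexity forces $X(q)=\emptyset$ for $q\ge 1$, the weak Morse inequalities then kill the higher cohomology up to $o(m^n)$, the asymptotic Riemann--Roch--Hirzebruch identity \eqref{e-102covering1} isolates the $q=0$ term, and strong pseudoconvexity at a point makes the resulting integral over $X(0)$ strictly positive. No gaps; the paper gives no further detail than this.
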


When $\Gamma = \left\{ e \right\}$, $p=1$ and $\widetilde{E}$ is trivial line bundle, we deduce the following Morse inequalities of Hsiao-Li, see \cite[Theorem 2.2 and Theorem 2.5]{HsiaoLi15}. 

\begin{corollary}\label{t-01}
With the above notations and assumptions, 
	as $m \to \infty$, for $q=0, 1, \cdots, n,$ the $m$-th Fourier components of Kohn-Rossi cohomology satisfy the following strong Morse inequalities, 
	\begin{equation}\label{e-101}
	\sum_{j=0}^q(-1)^{q-j}\dim H^{j}_{b, m}(X)
	\, \le \, \frac{m^{n}}{2\pi^{n+1}} \sum_{j=0}^q (-1)^{q-j} \int_{X(j)} \left| \det(\mathcal{L}_{x}) \right| dv_{X}(x)
	+o(m^{n}), 
	\end{equation}
	where $\mathcal{L}_{x}$ is the Levi form at $x\in X$.
	In particular, we get the weak Morse inequalities
	\begin{equation}\label{e-102}
\dim H^{q}_{b, m}(X)
\, \le \, \frac{m^{n}}{2\pi^{n+1}}\int_{X(q)} \left| \det(\mathcal{L}_{x}) \right| dv_{X}(x)
	+o(m^{n}). 
	\end{equation}
\end{corollary}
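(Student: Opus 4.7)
The plan is to obtain Corollary \ref{t-01} as a direct specialization of Theorem \ref{t-01covering} to the degenerate situation $\Gamma = \{e\}$, $p = 1$, and $\widetilde{E}$ the trivial complex line bundle of rank $r = 1$. The content of the corollary then lies in identifying, under these choices, the objects appearing in the theorem with their classical counterparts, after which \eqref{e-101} and \eqref{e-102} become formal consequences of \eqref{e-101covering}.

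First I would carry out the relevant identifications. With $\Gamma$ trivial, the projection $\pi : \widetilde{X} \to X$ is a diffeomorphism of CR manifolds intertwining the two $S^1$-actions, $\widetilde{E}$ is the trivial line bundle over $X$, and the rank reduces to $r = 1$. The Von Neumann $\Gamma$-dimension $\dim_\Gamma$ then coincides with the usual complex dimension (cf.\ \cite[\S 3.6.1]{MM} or \cite[\S 3]{A76}). Because $X$ is compact, the Kohn Laplacians on the $m$-th Fourier components have closed range on their natural $L^2$-domains, so the reduced $L^2$-Kohn-Rossi groups $\overline{H}^{j}_{b,(2),m}(\widetilde{X},\widetilde{E})$ are finite-dimensional and canonically isomorphic to the ordinary Fourier components $H^{j}_{b,m}(X)$ defined by \eqref{e-KR}.

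Next, the hypothesis $p = 1$ forces $X_{{\rm reg\,}} = X_1 = X$; the $S^1$-action is regular everywhere and the divisibility condition $p \mid m$ holds for every $m \in \mathbb{N}$, so the second branch of \eqref{e-101covering} is vacuous. Substituting $p = 1$, $r = 1$, and the identifications above into the first branch yields exactly \eqref{e-101}. The weak inequality \eqref{e-102} follows by adding the strong inequality at degree $q$ to the one at degree $q - 1$: on the left the alternating signs cancel in every degree below $q$, leaving $\dim H^{q}_{b,m}(X)$, while on the right the same cancellation reduces the alternating sum of integrals to the single term $\int_{X(q)} \lvert \det(\mathcal{L}_{x}) \rvert \, dv_X(x)$, up to an $o(m^n)$ error. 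Since the argument is a pure specialization, there is no substantive analytic obstacle beyond Theorem \ref{t-01covering} itself; the only point requiring a small verification is the identification of the reduced $L^2$-cohomology with the classical Kohn-Rossi cohomology in the compact case, which is the standard closed-range statement in the transversal $S^1$-invariant setting underlying the main theorem.
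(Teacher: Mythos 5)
Your proposal is correct and is exactly the paper's (implicit) argument: the paper offers no separate proof of Corollary \ref{t-01}, simply asserting that it follows from Theorem \ref{t-01covering} by taking $\Gamma=\{e\}$, $p=1$, $r=1$. Your added details — identifying $\dim_\Gamma$ with the ordinary dimension, identifying $\overline{H}^{j}_{b,(2),m}$ with $H^{j}_{b,m}(X)$ via the discreteness of ${\rm Spec\,}\Box_{b,m}$ (Theorem \ref{t-spec}), and deducing \eqref{e-102} by summing the strong inequalities at degrees $q$ and $q-1$ — are the standard verifications and are all sound.
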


Let $X$ be a compact CR manifold of dimension $2n+1$, $n\geq 1$. A classical theorem due to Boutet de Monvel \cite{BM75} asserts that $X$ can be globally CR embedded into $\Complex^{N}$, for some $N\in\N$, when $X$ is strongly pseudoconvex with dimension $n\geq 5$.
Epstein \cite{Ep92} proved that if $X$ is strongly pseudoconvex with dimension $3$ and a global free transversal CR $S^{1}$-action, then $X$ can be embedded into $\Complex^{N}$ by positive Fourier components of CR functions. Corollary \ref{t-01} guarantees the abundance of positive Fourier components of CR functions to do embedding in general cases (e.g. the $S^{1}$-action can be only locally free). In \cite{HsiaoLi15}, the authors' proofs include localization of analytic objects (eigenfunctions, Szeg\"{o} kernels), Kohn $L^{2}$ estimates and scaling techniques. A more general version of Corollary \ref{t-01} (with $X$ being weakly pseudoconvex) is proved by Cheng-Hsiao-Tsai in \cite[Proposition 1.20 and Corollary 1.21]{CHT} in a different way. By using the Morse inequalities, \eqref{e-101} and \eqref{e-102}, Hsiao-Li \cite[Theorem 2.6]{HsiaoLi15} proved that there are abundant CR functions on $X$ when $X$ is weakly pseudoconvex and strongly pseudoconvex at a point. Corollary \ref{c-912} generalizes Theorem 2.6 of \cite{HsiaoLi15} to CR covering manifolds.  

This paper is organized as follows. In Section \ref{S:prelim} we introduce some basic notations, terminology and  definitions. In Section \ref{S:asympexp} we study the asymptotic behavior of heat kernels of Kohn Laplacians.
Section \ref{S:proof} is devoted to the heat kernel proof of the main theorem. 

\section{Preliminaries}\label{S:prelim}
\subsection{Some standard notations}\label{s-gue150508b}
We use the following notations: $\mathbb N=\set{1,2,\ldots}$, $\mathbb N_0=\mathbb N\cup\set{0}$, $\Real$ 
is the set of real numbers, $\Real_+:=\set{x\in\Real;\, x>0}$, $\ol\Real_+:=\set{x\in\Real;\, x\geq0}$.
For a multiindex $\alpha=(\alpha_1,\ldots,\alpha_n)\in\mathbb N_0^n$
we set $\abs{\alpha}=\alpha_1+\cdots+\alpha_n$. For $x=(x_1,\ldots,x_n)$ we write
\[
\begin{split}
&x^\alpha=x_1^{\alpha_1}\ldots x^{\alpha_n}_n,\quad
\pr_{x_j}=\frac{\pr}{\pr x_j}\,,\quad
\pr^\alpha_x=\pr^{\alpha_1}_{x_1}\ldots\pr^{\alpha_n}_{x_n}=\frac{\pr^{\abs{\alpha}}}{\pr x^\alpha}\,.
\end{split}
\]
Let $z=(z_1,\ldots,z_n)$, $z_j=x_{2j-1}+ix_{2j}$, $j=1,\ldots,n$, be coordinates of $\Complex^n$.
We write
\[
\begin{split}
&z^\alpha=z_1^{\alpha_1}\ldots z^{\alpha_n}_n\,,\quad\ol z^\alpha=\ol z_1^{\alpha_1}\ldots\ol z^{\alpha_n}_n\,,\\
&\pr_{z_j}=\frac{\pr}{\pr z_j}=
\frac{1}{2}\Big(\frac{\pr}{\pr x_{2j-1}}-i\frac{\pr}{\pr x_{2j}}\Big)\,,\quad\pr_{\ol z_j}=
\frac{\pr}{\pr\ol z_j}=\frac{1}{2}\Big(\frac{\pr}{\pr x_{2j-1}}+i\frac{\pr}{\pr x_{2j}}\Big),\\
&\pr^\alpha_z=\pr^{\alpha_1}_{z_1}\ldots\pr^{\alpha_n}_{z_n}=\frac{\pr^{\abs{\alpha}}}{\pr z^\alpha}\,,\quad
\pr^\alpha_{\ol z}=\pr^{\alpha_1}_{\ol z_1}\ldots\pr^{\alpha_n}_{\ol z_n}=
\frac{\pr^{\abs{\alpha}}}{\pr\ol z^\alpha}\,.
\end{split}
\]

Let $X$ be a $C^\infty$ orientable paracompact manifold. 
We let $TX$ and $T^*X$ denote the tangent bundle of $X$ and the cotangent bundle of $X$, respectively.
The complexified tangent bundle of $X$ and the complexified cotangent bundle of $X$ 
will be denoted by $\Complex TX$ and $\Complex T^*X$, respectively. We write $\langle\,\cdot\,,\cdot\,\rangle$ 
to denote the pointwise duality between $T^*X$ and $TX$.
We extend $\langle\,\cdot\,,\cdot\,\rangle$ bilinearly to $\Complex T^*X\times\Complex TX$. For $u\in \Complex T^*X$, $v\in\Complex TX$, we also write $u(v):=\langle\,u\,,v\,\rangle$.

Let $Y\subset X$ be an open set. The spaces of
smooth sections of $E$ over $Y$ and distribution sections of $E$ over $Y$ will be denoted by $C^\infty(Y, E)$ and $D'(Y, E)$, respectively.

\subsection{CR manifolds with $S^1$-action}\label{s-gue150508bI} 

Let $(X, T^{1,0}X)$ be a compact CR manifold of dimension $2n+1$, $n\geq 1$, where $T^{1,0}X$ is a CR structure of $X$. That is, $T^{1,0}X$ is a subbundle of rank $n$ of the complexified tangent bundle $\mathbb{C}TX$, satisfying $T^{1,0}X\cap T^{0,1}X=\{0\}$, where $T^{0,1}X=\overline{T^{1,0}X}$, and $[\mathcal V,\mathcal V]\subset\mathcal V$, where $\mathcal V=C^\infty(X, T^{1,0}X)$. We assume that $X$ admits a $S^1$ action: $S^1\times X\rightarrow X$. We write $e^{i\theta}$ to denote the $S^1$ action. Let $T\in C^\infty(X, TX)$ be the global real vector field induced by the $S^1$ action given by 
$(Tu)(x)=\frac{\partial}{\partial\theta}\left(u(e^{i\theta}\circ x)\right)|_{\theta=0}$, $u\in C^\infty(X)$. 

\begin{definition}\label{d-gue160502}
	We say that the $S^1$ action $e^{i\theta}$ is CR if
	$[T, C^\infty(X, T^{1,0}X)]\subset C^\infty(X, T^{1,0}X)$ and the $S^1$ action is transversal if for each $x\in X$,
	$\Complex T(x)\oplus T_x^{1,0}X\oplus T_x^{0,1}X=\mathbb CT_xX$. Moreover, we say that the $S^1$ action is locally free if $T\neq0$ everywhere. 
\end{definition}

Note that if the $S^1$ action is transversal, then it is locally free.
We assume throughout that $(X, T^{1,0}X)$ is a connected CR manifold with a transversal CR $S^1$ action $e^{i\theta}$ and we let $T$ be the global vector field induced by the $S^1$ action. Let $\omega_0\in C^\infty(X,T^*X)$ be the global real one form determined by $\langle\,\omega_0\,,\,u\,\rangle=0$, for every $u\in T^{1,0}X\oplus T^{0,1}X$ and $\langle\,\omega_0\,,\,T\,\rangle=-1$. 

\begin{definition}\label{d-gue150508f}
	For $p\in X$, the Levi form $\mathcal L_p$ is the Hermitian quadratic form on $T^{1,0}_pX$ given by 
	$\mathcal{L}_p(U,\ol V)=-\frac{1}{2i}\langle\,d\omega_0(p)\,,\,U\wedge\ol V\,\rangle$, $U, V\in T^{1,0}_pX$.
\end{definition}

\begin{definition}\label{d-gue150508ff}
If the Levi form $\mathcal{L}_p$ is positive definite, we say that $X$ is strongly pseudoconvex at $p$. If the Levi form is positive definite at every point of $X$, we say that $X$ is strongly pseudoconvex.
\end{definition}
%We assume throughout that $(X, T^{1,0}X)$ is strongly pseudoconvex. 

Denote by $T^{*1,0}X$ and $T^{*0,1}X$ the dual bundles of
$T^{1,0}X$ and $T^{0,1}X$, respectively. Define the vector bundle of $(0,q)$ forms by
$T^{*0,q}X=\Lambda^q(T^{*0,1}X)$. Put $T^{*0,\bullet}X:=\oplus_{j\in\set{0,1,\ldots,n}}T^{*0,j}X$.
Let $D\subset X$ be an open subset. Let $\Omega^{0,q}(D)$
denote the space of smooth sections of $T^{*0,q}X$ over $D$ and let $\Omega_0^{0,q}(D)$
be the subspace of $\Omega^{0,q}(D)$ whose elements have compact support in $D$. Put 
\[\begin{split}
&\Omega^{0,\bullet}(D):=\oplus_{j\in\set{0,1,\ldots,n}}\Omega^{0,j}(D),\\
&\Omega^{0,\bullet}_0(D):=\oplus_{j\in\set{0,1,\ldots,n}}\Omega^{0,j}_0(D).\end{split}\]
Similarly, if $E$ is a vector bundle over $D$, then we let $\Omega^{0,q}(D,E)$ denote the space of smooth sections of $T^{*0,q}X\otimes E$ over $D$ and let $\Omega_0^{0,q}(D,E)$ be the subspace of $\Omega^{0,q}(D, E)$ whose elements have compact support in $D$. Put 
\[\begin{split}
&\Omega^{0,\bullet}(D,E):=\oplus_{j\in\set{0,1,\ldots,n}}\Omega^{0,j}(D,E),\\
&\Omega^{0,\bullet}_0(D,E):=\oplus_{j\in\set{0,1,\ldots,n}}\Omega^{0,j}_0(D,E).\end{split}\]

Fix $\theta_0\in]-\pi, \pi[$, $\theta_0$ small. Let
$$d e^{i\theta_0}: \mathbb CT_x X\rightarrow \mathbb CT_{e^{i\theta_0}x}X$$
denote the differential map of $e^{i\theta_0}: X\rightarrow X$. By the CR property of the $S^1$ action, we can check that
\begin{equation}\label{e-gue150508fa}
\begin{split}
de^{i\theta_0}:T_x^{1,0}X\rightarrow T^{1,0}_{e^{i\theta_0}x}X,\\
de^{i\theta_0}:T_x^{0,1}X\rightarrow T^{0,1}_{e^{i\theta_0}x}X,\\
de^{i\theta_0}(T(x))=T(e^{i\theta_0}x).
\end{split}
\end{equation}
Let $(e^{i\theta_0})^*:\Lambda^j(\Complex T^*X)\To\Lambda^j(\Complex T^*X)$ be the pull-back map by $e^{i\theta_0}$, $j=0,1,\ldots,2n+1$. From \eqref{e-gue150508fa}, it is easy to see that for every $q=0,1,\ldots,n$, 
\begin{equation}\label{e-gue150508faI}
(e^{i\theta_0})^*:T^{*0,q}_{e^{i\theta_0}x}X\To T^{*0,q}_{x}X.
\end{equation}
Let $u\in\Omega^{0,q}(X)$. Define
\begin{equation}\label{e-gue150508faII}
Tu:=\frac{\pr}{\pr\theta}\bigr((e^{i\theta})^*u\bigr)|_{\theta=0}\in\Omega^{0,q}(X).
\end{equation}
For every $\theta\in\Real$ and every $u\in C^\infty(X,\Lambda^j(\Complex T^*X))$, we write $u(e^{i\theta}\circ x):=(e^{i\theta})^*u(x)$. 

Let $\ddbar_b:\Omega^{0,q}(X)\rightarrow\Omega^{0,q+1}(X)$ be the tangential Cauchy-Riemann operator. From the CR property of the $S^1$ action, it is straightforward to see that
\[T\ddbar_b=\ddbar_bT\ \ \mbox{on $\Omega^{0,\bullet}(X)$}.\]

\begin{definition}\label{d-gue50508d}
	Let $D\subset U$ be an open set. We say that a function $u\in C^\infty(D)$ is rigid if $Tu=0$. We say that a function $u\in C^\infty(X)$ is Cauchy-Riemann (CR for short)
	if $\ddbar_bu=0$. We call $u$ a rigid CR function if  $\ddbar_bu=0$ and $Tu=0$.
\end{definition}

\begin{definition} \label{d-gue150508dI}
	Let $F$ be a complex vector bundle over $X$. We say that $F$ is rigid (CR) if 
	$X$ can be covered with open sets $U_j$ with trivializing frames $\set{f^1_j,f^2_j,\dots,f^r_j}$, $j=1,2,\ldots$, such that the corresponding transition matrices are rigid (CR). The frames $\set{f^1_j,f^2_j,\dots,f^r_j}$, $j=1,2,\ldots$, are called rigid (CR) frames. 
\end{definition}

\begin{definition}\label{d-gue150514f}
	Let $F$ be a complex rigid vector bundle over $X$ and let $\langle\,\cdot\,|\,\cdot\,\rangle_F$ be a Hermitian metric on $F$. We say that $\langle\,\cdot\,|\,\cdot\,\rangle_F$ is a rigid Hermitian metric if for every rigid local frames $f_1,\ldots, f_r$ of $F$, we have $T\langle\,f_j\,|\,f_k\,\rangle_F=0$, for every $j,k=1,2,\ldots,r$. 
\end{definition} 

It is known that there is a rigid Hermitian metric on any rigid vector bundle $F$ (see Theorem 2.10 in~\cite{CHT} and Theorem 10.5 in~\cite{Hsiao14}). Note that  Baouendi-Rothschild-Treves~\cite{BRT85} proved that $T^{1,0}X$ is a rigid complex vector bundle over $X$.

From now on, let $E$ be a rigid CR vector bundle over $X$ and we take a rigid Hermitian metric $\langle\,\cdot\,|\,\cdot\,\rangle_E$ on $E$ and take a rigid Hermitian metric $\langle\,\cdot\,|\,\cdot\,\rangle$ on $\Complex TX$ such that $T^{1,0}X\perp T^{0,1}X$, $T\perp (T^{1,0}X\oplus T^{0,1}X)$, $\langle\,T\,|\,T\,\rangle=1$.
The Hermitian metrics on $\Complex TX$ and on $E$ induce Hermitian metrics $\langle\,\cdot\,|\,\cdot\,\rangle$ and $\langle\,\cdot\,|\,\cdot\,\rangle_E$ on $T^{*0,\bullet}X$ and $T^{*0,\bullet}X\otimes E$, respectively. 
We denote by $dv_X=dv_X(x)$ the volume form on $X$ induced by the fixed 
Hermitian metric $\langle\,\cdot\,|\,\cdot\,\rangle$ on $\Complex TX$. Then we get natural global $L^2$ inner products $(\,\cdot\,|\,\cdot\,)_{E}$, $(\,\cdot\,|\,\cdot\,)$
on $\Omega^{0,\bullet}(X,E)$ and $\Omega^{0,\bullet}(X)$, respectively. We denote by $L^2(X,T^{*0,q}X
\otimes E)$ and $L^2(X,T^{*0,q}X)$ the completions of $\Omega^{0,q}(X,E)$ and $\Omega^{0,q}(X)$ with respect to $(\,\cdot\,|\,\cdot\,)_{E}$ and $(\,\cdot\,|\,\cdot\,)$, respectively. Similarly, we denote by $L^2(X,T^{*0,\bullet}X
\otimes E)$ and $L^2(X,T^{*0,\bullet}X)$ the completions of $\Omega^{0,\bullet}(X,E)$ and $\Omega^{0,\bullet}(X)$ with respect to $(\,\cdot\,|\,\cdot\,)_{E}$ and $(\,\cdot\,|\,\cdot\,)$, respectively. We extend $(\,\cdot\,|\,\cdot\,)_{E}$ and $(\,\cdot\,|\,\cdot\,)$ to $L^2(X,T^{*0,\bullet}X\otimes E)$ and $L^2(X,T^{*0,\bullet}X)$ in the standard way, respectively.  For $f\in L^{2}(X,T^{*0,\bullet}X\otimes E)$, we denote $\norm{f}^2_{E}:=(\,f\,|\,f\,)_{E}$. Similarly, for $f\in L^{2}(X,T^{*0,\bullet}X)$, we denote $\norm{f}^2:=(\,f\,|\,f\,)$. 

We also write $\ddbar_b$ to denote the tangential Cauchy-Riemann operator acting on forms with values in $E$:
\[\ddbar_b:\Omega^{0,\bullet}(X, E)\To\Omega^{0,\bullet}(X,E).\]
Since $E$ is rigid, we can also define $Tu$ for every $u\in\Omega^{0,q}(X,E)$ and we have 
\begin{equation}\label{e-gue150508d}
T\ddbar_b=\ddbar_bT\ \ \mbox{on $\Omega^{0,\bullet}(X,E)$}.
\end{equation}
For every $m\in\mathbb Z$, let
\begin{equation}\label{e-gue150508dI}
\begin{split}
&\Omega^{0,q}_m(X,E):=\set{u\in\Omega^{0,q}(X,E);\, Tu=imu},\ \ q=0,1,2,\ldots,n,\\
&\Omega^{0,\bullet}_m(X,E):=\set{u\in\Omega^{0,\bullet}(X,E);\, Tu=imu}.
\end{split}
\end{equation} 
For each $m\in\mathbb Z$, we denote by $L^2_m(X,T^{*0,q}X\otimes E)$ and $L^2_m(X,T^{*0,q}X)$ the completions of $\Omega^{0,q}_m(X,E)$ and $\Omega^{0,q}_m(X)$ with respect to $(\,\cdot\,|\,\cdot\,)_{E}$ and $(\,\cdot\,|\,\cdot\,)$, respectively. Similarly, we denote by $L^2_m(X,T^{*0,\bullet}X\otimes E)$ and $L^2_m(X,T^{*0,\bullet}X)$ the completions of $\Omega^{0,\bullet}_m(X,E)$ and $\Omega^{0,\bullet}_m(X)$ with respect to $(\,\cdot\,|\,\cdot\,)_{E}$ and $(\,\cdot\,|\,\cdot\,)$, respectively.

\subsection{Covering manifolds, Von Neumann dimension}\label{SS:covering} 
Let $(X, T^{1,0}X)$ be a compact CR manifold of dimension $2n+1$, $n\geq 1$. Let $\widetilde{X}$ be a paracompact CR manifold, such that there is a discrete group $\Gamma$ acting freely on $\widetilde{X}$ having $X \, = \, \widetilde{X}/\Gamma$. Let $\pi \, : \, \widetilde{X} \to X$ be the natural projection with the pull-back map $\pi^* : TX \to T\widetilde{X}$. Then $\widetilde{X}$ admits a pull-back CR structure $T^{1,0}\widetilde{X} := \pi^*T^{1,0}X$ and, hence, a CR manifold. We assume that $\widetilde{X}$ admits a transversal CR locally free $S^1$ action, denoted by $e^{i\theta}$. We further assume that the map 
 $$
 \Gamma \times \widetilde{X} \to \widetilde{X}, \  (\gamma, \widetilde{x}) \mapsto \gamma \circ \widetilde{x}, \quad \forall \widetilde{x} \in \widetilde{X}, \quad \forall \gamma \in \Gamma.
 $$
 is CR, i.e. 
\begin{equation}\label{E:crmap}
\gamma_*(T^{1,0}_{\widetilde{x}}\widetilde{X}) \subseteq T^{1,0}_{\gamma \cdot {\widetilde{x}}}\widetilde{X},
\end{equation} 
and 
 $$
 e^{i\theta} \circ \gamma \circ \widetilde{x} \, = \, \gamma \circ e^{i\theta} \circ \widetilde{x}, \quad \forall \widetilde{x} \in \widetilde{X}, \quad \forall \theta \in [0, 2\pi[, \quad \forall \gamma \in \Gamma.
 $$ 
It is easy to see that the $S^1$-action $e^{i\theta}$ on $\widetilde{X}$ induces a transversal CR locally free $S^1$ action, also denoted by $e^{i\theta}$.
We denote by $\widetilde{T}:= \pi^* T$ the pull-back one form on $\widetilde{X}$, then $T$ is the global real vector field induced by the $S^1$-action on $X$. Let $\widetilde{\omega}_0 \, := \, \pi^* \omega_0$ be the pull-back one form on $\widetilde{X}$, where $\omega_0$ is the global real one form on $X$ as defined in Subsection~\ref{s-gue150508bI}. Then, for $\widetilde{p} \in \widetilde{X}$, the Levi form $\widetilde{\mathcal{L}}_{\widetilde{p}}$ is the Hermitian quadratic form on $T^{1,0}_{\widetilde{p}} \widetilde{X}$ given by 
\begin{equation}\label{e-que1807241627}
\widetilde{\mathcal{L}}_{\widetilde{p}}( \widetilde{U},\ol{\widetilde{V}}) = -\frac{1}{2i}\langle\,d \widetilde{\omega}_0(\widetilde{p})\,,\, \widetilde{U} \wedge \ol{\widetilde{V}} \,\rangle =-\frac{1}{2i}\langle\,d\omega_0(\pi(\widetilde{p}))\,,\, \pi_* \widetilde{U} \wedge \pi_* \ol{\widetilde{V}}\,\rangle, 
\end{equation} where $\widetilde{U}, \widetilde{V} \in T^{1,0}_{\widetilde{p}} \widetilde{X}$. 

As usual, let $\Omega^{0,q}(\widetilde{X})$ denote the space of smooth sections of $\wedge^q(T^{*0,1}\widetilde{X})$. We also denote by $\ddbar_b:\Omega^{0,q}(\widetilde{X})\rightarrow\Omega^{0,q+1}(\widetilde{X})$ the tangential Cauchy-Riemann operator. Then $\widetilde{T}\ddbar_b=\ddbar_b\widetilde{T}\ \ \mbox{on $\Omega^{0,\bullet}(\widetilde{X})$}.$ Let $E$ be a rigid CR vector bundle over $X$, then $\widetilde{E} := \pi^*E$ is a $\Gamma$-invariant rigid CR vector bundle over $\widetilde{X}$. Again let $\Omega^{0,q}(\widetilde{X}, \widetilde{E})$ denote the space of smooth sections of $\wedge^q(T^{*0,1}\widetilde{X})\otimes \widetilde{E}$. We again denote by $\ddbar_b:\Omega^{0,q}(\widetilde{X}, \widetilde{E})\rightarrow\Omega^{0,q+1}(\widetilde{X}, \widetilde{E})$ the tangential Cauchy-Riemann operator. Then again
$\widetilde{T}\ddbar_b=\ddbar_b\widetilde{T}\ \ \mbox{on $\Omega^{0,\bullet}(\widetilde{X}, \widetilde{E})$}$. We denote by $L^2(\widetilde{X},T^{*0,q}\widetilde{X}\otimes \widetilde{E})$ and $L^2(\widetilde{X},T^{*0,q}\widetilde{X})$ the completions of $\Omega^{0,q}(\widetilde{X},\widetilde{E})$ and $\Omega^{0,q}(\widetilde{X})$ with respect to the corresponding pull-back metrics $(\,\cdot\,|\,\cdot\,)_{\widetilde{E}}$ and $(\,\cdot\,|\,\cdot\,)$. Similarly, we denote by $L^2(\widetilde{X},T^{*0,\bullet}\widetilde{X}\otimes \widetilde{E})$ and $L^2(\widetilde{X},T^{*0,\bullet}\widetilde{X})$ the completions of $\Omega^{0,\bullet}(\widetilde{X},\widetilde{E})$ and $\Omega^{0,\bullet}(\widetilde{X})$ with respect to the corresponding pull-back metrics $(\,\cdot\,|\,\cdot\,)_{\widetilde{E}}$ and $(\,\cdot\,|\,\cdot\,)$. 

As usual, for every $m\in\mathbb Z$, let
\begin{equation}\label{e-gue180803}
\begin{split}
&\Omega^{0,q}_m(\widetilde{X},\widetilde{E}):=\set{u\in\Omega^{0,q}(\widetilde{X},\widetilde{E});\, \widetilde{T}u=imu},\ \ q=0,1,2,\ldots,n,\\
&\Omega^{0,\bullet}_m(\widetilde{X},\widetilde{E}):=\set{u\in\Omega^{0,\bullet}(\widetilde{X},\widetilde{E});\, \widetilde{T}u=imu}.
\end{split}
\end{equation} 
For each $m\in\mathbb Z$, we denote by $L^2_m(\widetilde{X},T^{*0,q}\widetilde{X}\otimes \widetilde{E})$ and $L^2_m(\widetilde{X},T^{*0,q}\widetilde{X})$ the completions of $\Omega^{0,q}_m(\widetilde{X},\widetilde{E})$ and $\Omega^{0,q}_m(\widetilde{X})$ with respect to the corresponding pull-back metrics $(\,\cdot\,|\,\cdot\,)_{\widetilde{E}}$ and $(\,\cdot\,|\,\cdot\,)$. Similarly, we denote by $L^2_m(\widetilde{X},T^{*0,\bullet}\widetilde{X}\otimes \widetilde{E})$ and $L^2_m(\widetilde{X},T^{*0,\bullet}\widetilde{X})$ the completions of $\Omega^{0,\bullet}_m(\widetilde{X},\widetilde{E})$ and $\Omega^{0,\bullet}_m(\widetilde{X})$ with respect to the corresponding pull-back metrics $(\,\cdot\,|\,\cdot\,)_{\widetilde{E}}$ and $(\,\cdot\,|\,\cdot\,)$. 

Recall that $U \subset \widetilde{X}$ is called a fundamental domain of the action of $\Gamma$ on $\widetilde{X}$ if the following conditions hold:
\begin{flalign}\label{E:fd}
\begin{split}
& 1. \quad\widetilde X=\cup_{\gamma\in\Gamma}\gamma(\overline{U}),\\
& 2. \quad\gamma_{1}(U)\cap\gamma_{2}(U)=\emptyset \quad \text{for} \quad \gamma_1, \gamma_2\in\Gamma,
\gamma_1\neq\gamma_2, \\
& 3. \quad\overline{U}\setminus U \quad \text{is of measure 0}. \\
\end{split}
\end{flalign}

We can take $U$ to be $S^1$-invariant and with the pull-back $S^1$-action $e^{i\theta}$. We construct such a fundamental domain in the following: From the discussion in the proof of \cite[Theorem 2.11]{CHT}, we can find local trivializations $W_1, \cdots, W_N$ such that $X\, = \, \cup_{j=1}^N W_j$ and each $W_j$ is $S^1$-invariant. For each $j$, let $\widetilde{W}_j \subset \widetilde{X}$ be an $S^1$-invariant open set such that $\pi : \widetilde{W}_j \to W_j$ is a diffeomorphism and a CR map with inverse $\phi_j \, : \, W_j \to \widetilde{W}_j$. Define $U_j \, = \, W_j \backslash (\cup_{i < j}\overline{W}_i \cap W_j)$. Then $U \, := \, \cup_j \phi_j(U_j)$ is the fundamental domain we want.

 It is easy to see that 
\begin{equation}\label{E:3.6.1}
L^2(\widetilde{X}, \widetilde{E}) \simeq L^2\Gamma \otimes L^2(U, \widetilde{E}) \simeq L^2\Gamma \otimes L^2(X, E).
\end{equation}
We then have a unitary action of $\Gamma$ by left translations on $L^2\Gamma$ by $t_\gamma\delta_\eta = \delta_{\gamma\eta}$, where $\left\{ \delta_\eta \, : \, \eta \in \Gamma \right\}$ is the orthonormal basis of $L^2\Gamma$ formed by the delta functions. It induces a unitary action of $\Gamma$ on $L^2(\widetilde{X}, \widetilde{E})$ by $\gamma \mapsto T_\gamma = t_\gamma \otimes \operatorname{Id}$. 

Let us recall the definition of the Von Neumann dimension or $\Gamma$-dimension of a $\Gamma$-module $V \subset L^2(\widetilde{X},T^{*0,q}\widetilde{X}\otimes \widetilde{E})$, see also \cite[Definition 3.6.1]{MM}. We shall denote by $\mathscr{L}(A)$ the space of bounded operators of the Hilbert space $H$. Let $\mathscr{A}_{\Gamma}\subset \mathscr{L}(L^2 \Gamma)$ be the algebra of operators which commute with all left translations and denote the unit element of $\Gamma$ by $e$.
We define Tr$_\Gamma [A]:=\langle A\delta_e,\delta_e \rangle$, $A\in\mathscr{A}_{\Gamma}$. Note that a $\Gamma$-module is a left $\Gamma$-invariant subspace $V\subset L^2 \Gamma$. The orthogonal projection $P_V$ on $V$ is in $\mathscr{A}_{\Gamma}$ for a $\Gamma$-module $V$. Set $\dim_\Gamma V:=\text{Tr}_\Gamma[P_V]$. Now we replace $L^2 \Gamma$ by $L^2(\widetilde{X},T^{*0,q}\widetilde{X}\otimes \widetilde{E})$. Then to any operator 
$A\in\mathscr{L}(L^2(\widetilde{X},T^{*0,q}\widetilde{X}\otimes \widetilde{E}))$, we associate operators
$a_{\gamma\eta}\in \mathscr{L}(L^2(U,T^{*0,q}\widetilde{X}\otimes \widetilde{E}))$ such that $a_{\gamma\eta}(f)$ is the projection of $A(\delta_{\gamma}\otimes f)$ on $\mathbb{C}\delta_\eta \otimes L^2(U,T^{*0,q}\widetilde{X}\otimes \widetilde{E})$. In addition, if $A\in\mathscr{A}_{\Gamma}$ and $A$ is positive, then $a_{\gamma\eta}=a_{e,\gamma^{-1}\eta}$ and 
%\begin{equation}\label{E:3.6.7n9}
\[
\operatorname{Tr}_\Gamma [A] \, := \, \operatorname{Tr}[a_{ee}] \, \geq \, 0,
\]
%\end{equation}
is well-defined. The orthogonal projection $P_V$ on $V\subset L^2(\widetilde{X},T^{*0,q}\widetilde{X}\otimes \widetilde{E})$ is in $\mathscr{A}_{\Gamma}$ for a $\Gamma$-module $V$.
\begin{definition} 
The Von Neumann dimension or $\Gamma$-dimension of a $\Gamma$-module $V$ is defined by 
$$
\dim_\Gamma V:=\operatorname{Tr}_\Gamma[P_V].
$$
\end{definition}

\section{Asymptotic expansion of heat kernels of Kohn Laplacians}\label{S:asympexp}
In this section, we recall the definition of heat kernels.
Then we give a new version of asymptotic expansions of heat kernels of Kohn Laplacians.

\subsection{Asymptotics of heat kernels of Kohn Laplacians on a compact CR manifold}

Since $T\ddbar_b=\ddbar_bT$ and $E$ is a rigid CR vector bundle with a rigid Hermitian metric,
we have 
\[\ddbar_{b,m}:=\ddbar_b:\Omega^{0,\bullet}_m(X,E)\To\Omega^{0,\bullet}_m(X,E),\ \ \forall m\in\mathbb Z.\] 
The $m$-th Fourier component of Kohn-Rossi cohomology is given by
\begin{equation}\label{e-KR}
H^{q}_{b, m}(X,E):=\frac{\Ker \ddbar_{b}: \Omega^{0,q}_m(X,E)\To\Omega^{0,q+1}_m(X,E)  }
{\text{Im} \ddbar_{b}: \Omega^{0,q-1}_m(X,E)\To\Omega^{0,q}_m(X,E) }.
\end{equation}
We also write
\[\ol{\pr}^{*}_b:\Omega^{0,\bullet}(X,E)\To\Omega^{0,\bullet}(X,E)\]
to denote the formal adjoint of $\ddbar_b$ with respect to $(\,\cdot\,|\,\cdot\,)_E$. Since $\langle\,\cdot\,|\,\cdot\,\rangle_E$ and $\langle\,\cdot\,|\,\cdot\,\rangle$ are rigid, we can check that 
\begin{equation}\label{e-gue150517i}
\begin{split}
&T\ddbar^{*}_b=\ddbar^{*}_bT\ \ \mbox{on $\Omega^{0,\bullet}(X,E)$},\\
&\ddbar^{*}_{b,m}:=\ddbar^{*}_b:\Omega^{0,\bullet}_m(X,E)\To\Omega^{0,\bullet}_m(X,E),\ \ \forall m\in\mathbb Z.
\end{split}
\end{equation}
Now, we fix $m\in\mathbb Z$. The $m$-th Fourier component of Kohn Laplacian is given by
\begin{equation}\label{e-gue151113y}
\Box_{b,m}:=(\ddbar_{b,m}+\ddbar^*_{b,m})^2:\Omega^{0,\bullet}_m(X,E)\To\Omega^{0,\bullet}_m(X,E).
\end{equation}
We extend $\Box_{b,m}$ to $L^{2}_m(X,T^{*0,\bullet}X\otimes E)$ by 
\begin{equation}\label{e-gue151113yI}
\Box_{b,m}:{\rm Dom\,}\Box_{b,m}\subset L^{2}_m(X,T^{*0,\bullet}X\otimes E)\To L^{2}_m(X,T^{*0,\bullet}X\otimes E)\,,
\end{equation}
where ${\rm Dom\,}\Box_{b,m}:=\{u\in L^{2}_m(X,T^{*0,\bullet}X\otimes E);\, \Box_{b,m}u\in L^{2}_m(X,T^{*0,\bullet}X\otimes E)\}$, where for any $u\in L^{2}_m(X,T^{*0,\bullet}X\otimes E)$, $\Box_{b,m}u$ is defined in the sense of distributions. 
We recall the following results (see Section 3 in~\cite{CHT}).
\begin{theorem}\label{t-spec}
The Kohn Laplacian $\Box_{b,m}$ is self-adjoint, ${\rm Spec\,}\Box_{b,m}$ is a discrete subset of $[0,\infty[$ and for every $\nu\in{\rm Spec\,}\Box_{b,m}$, $\nu$ is an eigenvalue of $\Box_{b,m}$ with finite multiplicity. 
\end{theorem}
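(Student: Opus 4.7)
The plan is to reduce Theorem \ref{t-spec} to standard elliptic theory on the compact manifold $X$. The Kohn Laplacian $\Box_b$ itself is only subelliptic (its principal symbol degenerates in the $T$-direction), but on each Fourier component $L^2_m(X,T^{*0,\bullet}X\otimes E)$ the vector field $T$ acts as multiplication by $im$, so $\Box_{b,m}$ behaves like an elliptic operator.

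The first step is to introduce the auxiliary operator
\begin{equation*}
\widetilde{\Box}_b := \Box_b - T^2 : \Omega^{0,\bullet}(X,E)\To\Omega^{0,\bullet}(X,E).
\end{equation*}
Because $\langle\,\cdot\,|\,\cdot\,\rangle$ and $\langle\,\cdot\,|\,\cdot\,\rangle_E$ are rigid and $T$ is a real vector field with $\langle T\,|\,T\rangle=1$, one has $T^{*}=-T$ modulo a zeroth-order term, so $-T^{2}=T^{*}T$ up to lower-order terms and $\widetilde{\Box}_b$ is formally self-adjoint and nonnegative. The crucial point is that the principal symbol of $\widetilde{\Box}_b$ at $\xi\in T^{*}_xX$ equals the principal symbol of $\Box_b$ plus $\langle\xi,T\rangle^2$; the transversality of the $S^1$-action, i.e.\ $\mathbb{C}T(x)\oplus T_x^{1,0}X\oplus T_x^{0,1}X=\mathbb{C}T_xX$, then shows that this symbol is positive definite for $\xi\neq 0$. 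Hence $\widetilde{\Box}_b$ is a second-order elliptic self-adjoint operator on the compact manifold $X$.

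Next, since $T$ commutes with $\ddbar_b$ and $\ddbar^{*}_b$ (see \eqref{e-gue150508d} and \eqref{e-gue150517i}), it commutes with $\Box_b$ and $\widetilde{\Box}_b$, so the orthogonal decomposition
\begin{equation*}
L^2(X,T^{*0,\bullet}X\otimes E) \;=\; \widehat{\bigoplus}_{m\in\mathbb Z} L^2_m(X,T^{*0,\bullet}X\otimes E)
\end{equation*}
reduces both operators. On $L^2_m$ we have the identity $\widetilde{\Box}_b = \Box_{b,m} + m^2$. By standard elliptic theory on a compact manifold, the Friedrichs extension of $\widetilde{\Box}_b$ is self-adjoint with compact resolvent, hence has discrete spectrum in $[0,\infty[$ with eigenspaces of finite dimension, and all eigensections are smooth by elliptic regularity.

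Finally, one transfers these properties to $\Box_{b,m}$: the self-adjoint extension of $\widetilde{\Box}_b$ restricts to a self-adjoint operator on $L^2_m(X,T^{*0,\bullet}X\otimes E)$, the restriction equals $\Box_{b,m}+m^2$, and subtracting the constant $m^2$ yields self-adjointness of $\Box_{b,m}$ together with discreteness of spectrum and finite multiplicity of eigenvalues. Nonnegativity of the spectrum follows directly from $\Box_{b,m}=(\ddbar_{b,m}+\ddbar^{*}_{b,m})^2\ge 0$. The only nonroutine step is verifying ellipticity of $\widetilde{\Box}_b$, which rests entirely on the transversality assumption; the remainder is Hilbert-space spectral theory.
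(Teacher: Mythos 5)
Your proposal is correct and takes essentially the same approach as the paper: for this statement the paper simply cites \cite[Section 3]{CHT}, and the argument there---as well as the paper's own proof of the analogous spectral lemma on the covering $\widetilde{X}$---is precisely your reduction via the elliptic operator $\Box_b - T^2$, which acts as $\Box_{b,m}+m^2$ on the $m$-th Fourier component, with transversality of the $S^1$-action supplying the ellipticity.
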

For every $\nu\in{\rm Spec\,}\Box_{b,m}$, let $\set{f^\nu_1,\ldots,f^\nu_{d_{\nu}}}$ be an orthonormal frame for the eigenspace of $\Box_{b,m}$ with eigenvalue $\nu$. The heat kernel $e^{-t\Box_{b,m}}(x,y)$ is given by 
\begin{equation}\label{e-gue151023a}
e^{-t\Box_{b,m}}(x,y)=\sum_{\nu\in{\rm Spec\,}\Box_{b,m}}\sum^{d_{\nu}}_{j=1}e^{-\nu t}f^\nu_j(x)\otimes(f^\nu_j(y))^\dagger,
\end{equation}
where $f^\nu_j(x)\otimes(f^\nu_j(y))^\dagger$ denotes the linear map: 
\[\begin{split}
f^\nu_j(x)\otimes(f^\nu_j(y))^\dagger:T^{*0,\bullet}_yX\otimes E_y&\To T^{*0,\bullet}_xX\otimes E_x,\\
u(y)\in T^{*0,\bullet}_yX\otimes E_y&\To f^\nu_j(x)\langle\,u(y)\,|\,f^\nu_j(y)\,\rangle_E\in T^{*0,\bullet}_xX\otimes E_x.\end{split}\]
Let $e^{-t\Box_{b,m}}:L^2(X,T^{*0,\bullet}X\otimes E)\To L^2_m(X,T^{*0,\bullet}X\otimes E)$ be the continuous operator with distribution kernel $e^{-t\Box_{b,m}}(x,y)$.

We denote by $\dot{\mathcal{R}}$ the Hermitian matrix $\dot{\mathcal{R}} \in \operatorname{End}(T^{1,0}X)$ such that for $V, W \in T^{1,0}X$,
\begin{equation}\label{E:1.5.15}
i d\omega_0(V, \overline{W}) = \langle\,\dot{\mathcal{R}} V\,|\, W \,\rangle.
\end{equation}
Let $\{ \omega_j \}_{j=1}^n$ be a local orthonormal frame of $T^{1,0}X$ with dual frame $\{ \omega^j \}_{j=1}^n$.
Set 
\begin{equation}
\gamma_d = - i\sum^n_{l,j=1} d\omega_0(\omega_j, \overline{\omega}_l) \overline{\omega}^l \wedge \iota_{\overline{\omega}_j},
\end{equation}
where $\iota_{\overline{\omega}_j}$ denotes the interior product of $\overline{\omega}_j$.
Then $\gamma_d \in \operatorname{End}(T^{*0,\bullet}X)$ and $-id\omega_0$ acts as the derivative $\gamma_d$ on $T^{*0,\bullet}X$. 
If we choose $\{ \omega_j \}_{j=1}^n$ to be an orthonormal basis of $T^{1,0}X$ such that
\begin{equation}\label{e-gue160127b}
\dot{\mathcal{R}}(x) = \operatorname{diag} (a_1(x), \cdots, a_n(x)) \in \operatorname{End}(T_x^{1,0}X),
\end{equation}\
then
\begin{equation}\label{E:1.5.19}
\gamma_d(x) = -\sum^n_{j=1} a_j(x) \overline{\omega}^j \wedge \iota_{\overline{\omega}_j}.
\end{equation}
Define ${\rm det\,}\dot{\mathcal{R}}(x):=a_1(x)\cdots a_n(x)$. 

Fix $x, y\in X$. Let $d(x,y)$ denote the standard Riemannian distance of $x$ and $y$ with respect to the given Hermitian metric. Take $\zeta$
\[
o < \zeta < \operatorname{inf} \left\{ \frac{2\pi}{p_k}, \left|\frac{2\pi}{p_r} - \frac{2\pi}{p_{r+1}}\right| , r=1, \cdots, k-1  \right\}.
\]
For $x \in X$, put 
$$
\hat{d}(x,X_{{\rm sing\,}}):=\inf\set{d(x,e^{-i\theta}x);\, \zeta \le \theta \le \frac{2\pi}{p}-\zeta}.
$$

The following result generalizes Theorem 3.1 in \cite{HH16}.

\begin{theorem}\label{T:1.6.1}
	With the above notations and assumptions,
	for every $\epsilon>0$, there are $m_0>0$, $\varepsilon_0>0$ and $C>0$ such that for all $m\geq m_0$, we have
	\begin{equation}\label{E:1.6.4}
	\begin{split}
	&\Big|e^{-\frac{t}{m} \Box_{b,m}}(x,x)-\sum\limits^{p}_{s=1}e^{\frac{2\pi(s-1)}{p}mi} (2\pi)^{-n-1} m^n \frac{\det(\dot{\mathcal{R}}) \exp(t \gamma_d)}{\det(1-\exp(-t\dot{\mathcal{R}}))}(x)  \otimes \operatorname{Id}_{E_x}\Big|\\
	&\leq \epsilon m^n+Cm^nt^{-n}e^{\frac{-\varepsilon_0m \hat{d}(x,X_{{\rm sing\,}})^2}{t}},\ \ \forall (t,x)\in \Real_{+}\times X_{{\rm reg\,}}.
	\end{split}
	\end{equation}
\end{theorem}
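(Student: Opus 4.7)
The proof extends the heat kernel analysis of \cite[Theorem 3.1]{HH16} to a non-trivial rigid CR vector bundle $E$ and general minimal period $2\pi/p$ on $X_{\rm reg}$. Rigidity of the Hermitian metric and of $E$ implies that $\Box_b$ commutes with the $S^1$-action, so a Fourier projection gives a schematic identity
\[
e^{-\frac{t}{m}\Box_{b,m}}(x,y) = \frac{1}{2\pi}\int_0^{2\pi} e^{-im\theta}\,(e^{i\theta})^*\,e^{-\frac{t}{m}\Box_b}(e^{i\theta}\circ x,\,y)\,d\theta,
\]
to be interpreted in the distributional sense as in \cite{HH16}. At a regular point $x\in X_p$, the equation $e^{i\theta}\circ x = x$ is satisfied precisely at the $p$ angles $\theta_s = 2\pi(s-1)/p$, $s=1,\ldots,p$. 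The plan is to split the $\theta$-integral into small arcs $I_s$ around each $\theta_s$, whose contributions yield the main term via a local scaling analysis, plus an off-diagonal remainder on which $d(x,e^{i\theta}\circ x)\geq \hat d(x,X_{\rm sing})$.

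For the remainder I would invoke Gaussian off-diagonal decay of $e^{-\frac{t}{m}\Box_b}$. Although $\Box_b$ is only subelliptic on $X$, its restriction to $L^2_m$ becomes uniformly elliptic transverse to the contact distribution because $T$ acts as $im$ there; the quantitative Gaussian bound is exactly the one established in \cite{HH16}. Integrating this bound in $\theta$ over the remainder set and using the defining inequality $d(x,e^{i\theta}\circ x)\geq \hat d(x,X_{\rm sing})$ produces the error term $Cm^n t^{-n}\exp(-\varepsilon_0 m\hat d(x,X_{\rm sing})^2/t)$.

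For each short arc around $\theta_s$ I would pass to $S^1$-invariant BRT coordinates $(z,\eta)$ centered at $x$ together with a rigid CR trivialization of $E$ near $x$, which put $T=\partial_\eta$ and diagonalize the Levi form as $\dot{\mathcal R}(x)=\operatorname{diag}(a_1(x),\ldots,a_n(x))$. Changing variables $\theta=\theta_s+\theta'/m$ and rescaling $(z,\eta)\mapsto(z/\sqrt m,\eta/m)$, the operator $m^{-1}\Box_b$ converges locally uniformly to the complex harmonic oscillator on $\Complex^n\times\Real$ twisted trivially by $E_x$; its on-diagonal heat kernel at time $t$ is given by the Mehler formula and produces the factor $(2\pi)^{-n-1}\det(\dot{\mathcal R})\exp(t\gamma_d)/\det(1-\exp(-t\dot{\mathcal R}))(x)\otimes\operatorname{Id}_{E_x}$, while the Jacobian of the rescalings yields the prefactor $m^n$ and the evaluation of $e^{-im\theta}$ at $\theta_s$ contributes the finite sum $\sum_{s=1}^p e^{2\pi(s-1)mi/p}$. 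The main obstacle, and the step that requires an argument beyond \cite{HH16}, is the uniformity of the $\epsilon m^n$ error as $x$ varies over $X_{\rm reg}$: this is obtained via an equicontinuity argument on the family of rescaled operators, using compactness of $X$ and continuous dependence of the BRT data, the $E$-trivialization, and the Levi form on the base point.
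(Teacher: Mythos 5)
Your outline follows essentially the same route as the paper: the paper's proof is exactly the argument of \cite{HH16} with the $S^1$-integral split into $p$ sub-intervals at the fixed angles of a point of $X_p$ (producing the phase sum $\sum_{s=1}^p e^{2\pi(s-1)mi/p}$), the off-diagonal arcs $u\in[\zeta,\tfrac{2\pi}{p}-\zeta]$ controlled by the Gaussian bound in terms of $\hat{d}(x,X_{{\rm sing\,}})$, and the near-diagonal arcs evaluated by the Mehler-type expansion of the local BRT heat kernels. The only point where your write-up and the paper diverge is that the paper never invokes a pointwise kernel of $e^{-\frac{t}{m}\Box_b}$ --- which need not be smooth, since $\Box_b$ is only hypoelliptic under Levi-form conditions not assumed here --- but instead works throughout with the approximate kernel $\Gamma_m$ assembled from the local elliptic Kodaira-Laplacian heat kernels $A_{B,m}$ on BRT charts and separately shows $e^{-\frac{t}{m}\Box_{b,m}}-\Gamma_m=O(e^{-m\epsilon_0/t})$; your ``schematic identity'' must be understood as this parametrix construction, which is effectively what you do once you defer all quantitative inputs to \cite{HH16}.
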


\begin{proof}
We use the notations from Section 3 in \cite{HH16}. Recall that $\Gamma_{m}$ is defined in \cite[(3.31)]{HH16} (see also \eqref{e-gue150626fIII2}). For $x\in X_{\rm reg}$, we have
\begin{equation}  \label{E:1.7}
\begin{split}
\Gamma_m(t,x,x) 
&=\frac{1}{2\pi}\sum^N_{j=1}\int^{2\pi}_{0}H_{j,m}(t,x,e^{iu}\circ
x)e^{imu}du\\
&=\frac{1}{2\pi}\sum\limits^{p}_{s=1}e^{\frac{2\pi(s-1)}{p}mi}\sum^N_{j=1}
\int^{\frac{2\pi}{p}}_{0}H_{j,m}(t,x,e^{iu}\circ x)e^{imu}du\\
&=\frac{1}{2\pi}\sum\limits^{p}_{s=1}e^{\frac{2\pi(s-1)}{p}mi}\sum^N_{j=1}
\int_{u\in[\zeta,\frac{2\pi}{p}-\zeta]}H_{j,m}(t,x,e^{iu}\circ
x)e^{imu}du\\
&\quad+\frac{1}{2\pi}\sum\limits^{p}_{s=1}e^{\frac{2\pi(s-1)}{p}mi}\sum^N_{j=1}
\int^{\zeta}_{-\zeta}H_{j,m}(t,x,e^{iu}\circ x)e^{imu}du,
\end{split}
\end{equation}
where $H_{j,m}$ is defined in \cite[(3.30)]{HH16} (see also \eqref{e-gue150626fIII2}).
From \cite[(3.29), (3.34)]{HH16} and \cite[(6.4)]{CHT}, there are $\varepsilon_0 > 0$ and $C_0$ independent of $j, x, m, t$ such that, for all $t\in \Real_{+}$ and for all $m \in \mathbb{N},$ we have
\begin{equation}\label{E:3.38HH16}
\left| \frac{1}{2\pi}\int_{u\in[\zeta,\frac{2\pi}{p}-\zeta]}H_{j,m}(t,x,e^{iu}\circ
x)e^{imu}du \right| \le C_0m^nt^{-n}e^{\frac{-\varepsilon_0m \hat{d}(x,X_{{\rm sing\,}})^2}{t}}.
\end{equation}

Then the proof is completed by applying \cite[(3.32), (3.39)]{HH16} and \eqref{E:3.38HH16}.

\begin{remark}\label{r-01}
It is easy to check that
\begin{equation}\label{e-23}
\sum\limits^{p}_{s=1}e^{\frac{2\pi(s-1)}{p}mi}
=\begin{cases}
p& p\mid m\\
0& p\nmid m.
\end{cases}
\end{equation}
\end{remark}
\end{proof}

%%%%%%%%%%%%%%%%%%%%%%%%%%%%%%%%%%%%%%%%%%%%%%%%%%%%%%%%%%%%%%%%%%%%%%%%%%

\subsection{BRT trivializations}

To prove Theorem~\ref{T:1.6.1}, we need some preparations. We first need the following result due to Baouendi-Rothschild-Treves~\cite{BRT85}.

\begin{theorem}\label{t-gue150514}
For every point $x_0\in X$, we can find local coordinates $x=(x_1,\cdots,x_{2n+1})=(z,\theta)=(z_1,\cdots,z_{n},\theta), z_j=x_{2j-1}+ix_{2j},j=1,\cdots,n, x_{2n+1}=\theta$, defined in some small neighborhood $D=\{(z, \theta): \abs{z}<\delta, -\varepsilon_0<\theta<\varepsilon_0\}$ of $x_0$, $\delta>0$, $0<\varepsilon_0<\pi$, such that $(z(x_0),\theta(x_0))=(0,0)$ and 
\begin{equation}\label{e-can}
\begin{split}
&T=\frac{\partial}{\partial\theta}\\
&Z_j=\frac{\partial}{\partial z_j}+i\frac{\partial\varphi}{\partial z_j}(z)\frac{\partial}{\partial\theta},j=1,\cdots,n
\end{split}
\end{equation}
where $Z_j(x), j=1,\cdots, n$, form a basis of $T_x^{1,0}X$, for each $x\in D$, and $\varphi(z)\in C^\infty(D,\mathbb R)$ is independent of $\theta$. We call $(D,(z,\theta),\varphi)$ BRT trivialization.
\end{theorem}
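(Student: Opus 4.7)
The plan is to prove the theorem in three stages: straighten $T$ to a coordinate vector field, build rigid complex coordinates on a transversal slice via Newlander--Nirenberg, and finally eliminate the real part of the residual ``connection'' coefficients by a shift of the $\theta$-coordinate.

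\emph{Stages 1 and 2.} Since $T$ is a smooth, nowhere-vanishing real vector field, the flow-box theorem yields local coordinates $(y_1,\ldots,y_{2n},\theta)$ near $x_0$ with $y(x_0)=0$, $\theta(x_0)=0$, in which $T=\partial/\partial\theta$; let $\Sigma=\{\theta=0\}$ be the transversal slice. The CR condition $[T,C^\infty(X,T^{1,0}X)]\subset C^\infty(X,T^{1,0}X)$ means the flow $\phi_\theta$ of $T$ preserves $T^{1,0}X$. Fixing any local frame $W_1,\ldots,W_n$ of $T^{1,0}X$ on $\Sigma$ near $x_0$ and transporting it via the flow, $L_j(\phi_\theta(x)):=(\phi_\theta)_*W_j(x)$, produces a local frame $L_1,\ldots,L_n$ of $T^{1,0}X$ with $[T,L_j]=0$, i.e.\ with $\theta$-independent coefficients in $(y,\theta)$. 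Projection along $T$ defines $\pi_*L_j\in\mathbb{C}T\Sigma$, and by transversality $V:=\mathrm{span}(\pi_*L_1,\ldots,\pi_*L_n)$ satisfies $V\oplus\overline V=\mathbb{C}T\Sigma$. Since the $L_j$'s are $T$-invariant, brackets descend to $\Sigma$, so $[L_i,L_j]\in T^{1,0}X$ yields $[\pi_*L_i,\pi_*L_j]\in V$, proving integrability of the almost complex structure $V$ on $\Sigma$. Newlander--Nirenberg then furnishes holomorphic coordinates $z_1,\ldots,z_n$ on $\Sigma$ near $x_0$ with $V=\mathrm{span}(\partial/\partial z_j)$. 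Pulling these back to a neighborhood of $x_0$ via the flow gives coordinates $(z,\theta)$ with $T=\partial/\partial\theta$ still, and a $\theta$-independent linear change of $T^{1,0}X$-frame normalizes the $L_j$'s to the shape
\[
L_j=\frac{\partial}{\partial z_j}+f_j(z,\bar z)\,\frac{\partial}{\partial\theta},\qquad j=1,\ldots,n,
\]
with $f_j$ smooth and independent of $\theta$.

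\emph{Stage 3.} A direct computation gives $[L_i,L_j]=(\partial f_j/\partial z_i-\partial f_i/\partial z_j)\,\partial/\partial\theta$; since no $L_k$ has a pure $\partial/\partial\theta$ component, the CR integrability $[L_i,L_j]\in T^{1,0}X$ forces $\partial f_j/\partial z_i=\partial f_i/\partial z_j$. Equivalently, the $(1,0)$-form $\sum_j f_j\,dz_j$ (with $\bar z$ treated as parameter) is $\partial$-closed on the polydisk, so the parametric Poincar\'e lemma---solve $\partial G/\partial z_j=f_j$ by path-independent integration in $z$ with $\bar z$ frozen---produces a smooth complex function $G(z,\bar z)$ with $\partial G/\partial z_j=f_j$. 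Write $G=H+i\varphi$ with $H,\varphi$ real. Introducing $\tilde z_j:=z_j$ and $\tilde\theta:=\theta-H(z,\bar z)$, the chain rule gives $\partial/\partial\tilde\theta=\partial/\partial\theta$ and $\partial/\partial\tilde z_j=\partial/\partial z_j+(\partial H/\partial z_j)\,\partial/\partial\theta$, hence
\[
L_j=\frac{\partial}{\partial\tilde z_j}+\Bigl(f_j-\frac{\partial H}{\partial z_j}\Bigr)\frac{\partial}{\partial\tilde\theta}=\frac{\partial}{\partial\tilde z_j}+i\,\frac{\partial\varphi}{\partial z_j}\,\frac{\partial}{\partial\tilde\theta},
\]
while $T=\partial/\partial\tilde\theta$. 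Relabeling $\tilde\theta\mapsto\theta$ yields the desired normal form; $\varphi$ is real and $\theta$-independent because $G$ is. The main obstacle is really the integrability step on the slice: Newlander--Nirenberg is the only substantive analytic input, and everything else is bookkeeping. What must be tracked carefully throughout is the $\theta$-independence---it is needed to produce the rigid frame in the first place, to keep the change-of-frame matrix after Newlander--Nirenberg free of $\theta$, and to ensure that the final $\tilde\theta$-shift is a function of $(z,\bar z)$ alone, so that $T=\partial/\partial\theta$ is preserved.
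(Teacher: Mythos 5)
The paper never proves this statement---it is imported verbatim from Baouendi--Rothschild--Treves \cite{BRT85}---so your argument can only be measured against the standard proof, which it essentially reproduces: straighten $T$ by the flow-box theorem, push a frame of $T^{1,0}X$ along the flow to make it rigid, descend to the slice $\Sigma$, invoke Newlander--Nirenberg there, and remove the residual coefficient by a shift of $\theta$. Stages 1 and 2 are sound (including the implicit point that $\pi_*L_j\in V=\mathrm{span}\{\partial/\partial z_k\}$ on $\Sigma$ together with $\theta$-independence kills all $\partial/\partial\bar z_k$ components, so a $\theta$-independent matrix normalizes the frame), and your bracket computation in Stage 3 correctly extracts $\partial f_j/\partial z_i=\partial f_i/\partial z_j$ from transversality.

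The step that does not survive scrutiny as written is the solution of $\partial G/\partial z_j=f_j$. For merely smooth $f_j$ there is no such thing as ``path-independent integration in $z$ with $\bar z$ frozen'': $z$ and $\bar z$ are not independent variables, and a value $f_j(\zeta,\bar z)$ with $\bar\zeta\neq\bar z$ is undefined unless $f_j$ is real-analytic (only then could you polarize). Nor does the naive homotopy primitive $G(z)=\int_0^1\sum_j f_j(tz)\,z_j\,dt$ work: differentiating and using the symmetry leaves the error term $-\int_0^1 t\sum_j\bar z_j\,(\partial f_k/\partial\bar z_j)(tz)\,dt$, which has no reason to vanish. What you actually need is the smooth local Dolbeault--Grothendieck ($\overline\partial$-Poincar\'e) lemma: the relations $\partial_{z_i}f_j=\partial_{z_j}f_i$ say exactly that the $(0,1)$-form $\sum_j\overline{f_j}\,d\bar z_j$ is $\overline\partial$-closed, so on a polydisc there is a smooth $u$ with $\partial u/\partial\bar z_j=\overline{f_j}$, and $G:=\bar u$ solves $\partial G/\partial z_j=f_j$; writing $G=H+i\varphi$ your Stage 3 then goes through verbatim. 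With this replacement the proof is complete, but note it introduces a second genuine analytic ingredient (an integral transform solving $\overline\partial$), so your closing claim that Newlander--Nirenberg is the only substantive analytic input---``everything else is bookkeeping''---should be tempered accordingly.
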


By using BRT trivialization, we get another way to define $Tu, \forall u\in\Omega^{0,q}(X)$. Let $(D,(z,\theta),\varphi)$ be a BRT trivialization. It is clear that
$$\{d\overline{z_{j_1}}\wedge\cdots\wedge d\overline{z_{j_q}}, 1\leq j_1<\cdots<j_q\leq n\}$$
is a basis for $T^{\ast0,q}_xX$, for every $x\in D$. Let $u\in\Omega^{0,q}(X)$. On $D$, we write
\begin{equation}\label{e-gue150524fb}
u=\sum\limits_{1\leq j_1<\cdots<j_q\leq n}u_{j_1\cdots j_q}d\overline{z_{j_1}}\wedge\cdots\wedge d\overline{z_{j_q}}.
\end{equation}
Then, on $D$, we can check that
\begin{equation}\label{lI}
Tu=\sum\limits_{1\leq j_1<\cdots<j_q\leq n}(Tu_{j_1\cdots j_q})d\overline{z_{j_1}}\wedge\cdots\wedge d\overline{z_{j_q}}
\end{equation}
and $Tu$ is independent of the choice of BRT trivializations. Note that, on BRT trivialization $(D,(z,\theta),\varphi)$, we have 
\begin{equation}\label{e-gue150514f}
\ddbar_b=\sum^n_{j=1}d\ol z_j\wedge(\frac{\partial}{\partial\ol z_j}-i\frac{\partial\varphi}{\partial\ol z_j}(z)\frac{\partial}{\partial\theta}).
\end{equation}

%%%%%%%%%%%%%%%%%%%%%%%%%%%%%%%%%%%%%%%%%%%%%%%%%%%%%%%%%%%%%%%%%%%%%%%%%%%

\subsection{Local heat kernels on BRT trivializations}

Until further notice, we fix $m\in\mathbb Z$. 
Let $B:=(D,(z,\theta),\varphi)$ be a BRT trivialization. We may assume that $D=U\times]-\varepsilon,\varepsilon[$, where $\varepsilon>0$ and $U$ is an open set of $\Complex^n$. Since $E$ is rigid, we can consider $E$ as a holomorphic vector bundle over $U$. We may assume that $E$ is trivial on $U$. Consider a trivial line bundle $L\To U$ with non-trivial Hermitian fiber metric $\abs{1}^2_{h^L}=e^{-2\varphi}$. Let $(L^m,h^{L^m})\To U$ be the $m$-th power of $(L,h^L)$.  Let $\Omega^{0,q}(U,E\otimes L^m)$ and $\Omega^{0,q}(U,E)$ be the spaces of $(0,q)$ forms on $U$ with values in $E\otimes L^m$ and $E$, respectively, $q=0,1,2,\ldots,n$. Put 
\[
\begin{split}
&\Omega^{0,\bullet}(U,E\otimes L^m):=\oplus_{j\in\set{0,1,\ldots,n}}\Omega^{0,j}(U,E\otimes L^m),\\
&\Omega^{0,\bullet}(U,E):=\oplus_{j\in\set{0,1,\ldots,n}}\Omega^{0,j}(U,E).
\end{split}\]
Since $L$ is trivial, from now on, we identify $\Omega^{0,\bullet}(U,E)$ with $\Omega^{0,\bullet}(U,E\otimes L^m)$.
Since the Hermitian fiber metric $\langle\,\cdot\,|\,\cdot\,\rangle_E$ is rigid, we can consider $\langle\,\cdot\,|\,\cdot\,\rangle_E$ as a Hermitian fiber metric on the holomorphic vector bundle $E$ over $U$. 
Let $\langle\,\cdot\,,\,\cdot\,\rangle$ be the Hermitian metric on $\Complex TU$ given by 
\[\langle\,\frac{\pr}{\pr z_j}\,,\,\frac{\pr}{\pr z_k}\,\rangle=\langle\,\frac{\pr}{\pr z_j}+i\frac{\pr\varphi}{\pr z_j}(z)\frac{\pr}{\pr\theta}\,|\,\frac{\pr}{\pr z_k}+i\frac{\pr\varphi}{\pr z_k}(z)\frac{\pr}{\pr\theta}\,\rangle,\ \ j,k=1,2,\ldots,n.\]
$\langle\,\cdot\,,\,\cdot\,\rangle$ induces a Hermitian metric on $T^{*0,\bullet}U:=\oplus_{j=0}^nT^{*0,j}U$, where $T^{*0,j}U$ is the bundle of $(0,j)$ forms on $U$, $j=0,1,\ldots,n$. We shall also denote this induced Hermitian metric on $T^{*0,\bullet}U$ by $\langle\,\cdot\,,\,\cdot\,\rangle$. The Hermitian metrics on $T^{*0,\bullet}U$ and $E$ induce a
Hermitian metric on $T^{*0,\bullet}U\otimes E$. We shall also denote this induced metric by $\langle\,\cdot\,|\,\cdot\,\rangle_{E}$.
Let $(\,\cdot\,,\,\cdot\,)$ be the $L^2$ inner product on $\Omega^{0,\bullet}(U,E)$ induced by $\langle\,\cdot\,,\,\cdot\,\rangle$, $\langle\,\cdot\,|\,\cdot\,\rangle_E$. Similarly, let $(\,\cdot\,,\,\cdot\,)_m$ be the $L^2$ inner product on $\Omega^{0,\bullet}(U,E\otimes L^m)$ induced by $\langle\,\cdot\,,\,\cdot\,\rangle$, $\langle\,\cdot\,|\,\cdot\,\rangle_E$ and $h^{L^m}$. 

The curvature of $L$ induced by $h^L$ is given by $R^L:=2\pr\ddbar\varphi$. Let $\dot{R}^L\in \operatorname{End}(T^{1,0}U)$ be the Hermitian matrix given by 
\[R^L(W,\ol Y)=\langle\,\dot{R}^LW\,,\, Y\,\rangle,\ \ W, Y\in T^{1,0}U.\]
Let $\{w_j \}_{j=1}^n$ be a local orthonormal frame of $T^{1,0}U$ with dual frame $\{ w^j \}_{j=1}^n$.
Set 
\begin{equation}\label{e-gue160128a}
\omega_d = - \sum_{l,j}R^L(w_j, \overline{w}_l) \overline{w}^l \wedge \iota_{\overline{w}_j},
\end{equation}
where $\iota_{\overline{w}_j}$ denotes the interior product of $\overline{w}_j$. 

%Let $\mathcal{A}_\ell \in C^\infty(U, \operatorname{End}(T^{*0,\bullet}U))$, $\ell=-n,-n+1,\ldots$, be such that, for every $k\in\mathbb Z$, 
%\begin{equation}\label{e-gue160128aI}
%(2\pi)^{-n} \frac{\det(\dot{R}^L) \exp(t \omega_d)}{\det(1-\exp(-t\dot{R}^L))}(z) = \sum_{\ell=-n}^k\mathcal{A}_\ell(z)t^\ell+O(t^{k+1})
%\end{equation}
%in $C^0(U, \operatorname{End}(T^{*0,\bullet}U))$ locally uniformly on $U$. Let 
%\[\begin{split}
%\pi:D&\To U,\\
%(z,\theta)&\To z
%\end{split}\]
%be the natural projection.
%It is straightforward to check that 
%\begin{equation}\label{e-gue160129I}
%(2\pi)^{-n} \frac{\det(\dot{R}^L)\exp(t\omega_d)}{\det(1-\exp(-t\dot{R}^L))}(\pi(x)) \otimes \operatorname{Id}_{E_{\pi(x)}} = (2\pi)^{-n} \frac{\det(\dot{\mathcal{R}}) \exp(t \gamma_d)}{\det(1-\exp(-t\dot{\mathcal{R}}))}(x)  \otimes \operatorname{Id}_{E_{x}},\ \ \forall x\in D.
%\end{equation}
% From \eqref{E:5.5.34}, \eqref{e-gue160128aI} and \eqref{e-gue160129I}, it is easy to see that 
%\begin{equation}\label{e-gue160131}
%\mathcal{A}_\ell(\pi(x))=(2\pi)A_\ell(x),\ \ \forall x\in D,\ \ \ell=-n,-n+1,\ldots. 
%\end{equation}

Let
\[\ddbar:\Omega^{0,\bullet}(U,E\otimes L^m)\To\Omega^{0,\bullet}(U,E\otimes L^m)\]
be the Cauchy-Riemann operator and let
\[\ol{\pr}^{*,m}:\Omega^{0,\bullet}(U,E\otimes L^m)\To\Omega^{0,\bullet}(U,E\otimes L^m)\] 
be the formal adjoint of $\ddbar$ with respect to $(\,\cdot\,,\,\cdot\,)_m$. Put 
\begin{equation}\label{e-gue150606II}
\Box_{B,m}:=(\ddbar+\ol{\pr}^{*,m})^2: \Omega^{0,\bullet}(U,E\otimes L^m)\To\Omega^{0,\bullet}(U,E\otimes L^m).
\end{equation}
We need the following result (see Lemma 5.1 in~\cite{CHT}) 

\begin{lemma}\label{l-gue150606}
Let $u\in\Omega^{0,\bullet}_m(X,E)$. On $D$, we write $u(z,\theta)=e^{im\theta}\Td u(z)$, $\Td u(z)\in\Omega^{0,\bullet}(U,E)$. Then,
\begin{equation}\label{e-gue150606III}
e^{-m\varphi}\Box_{B,m}(e^{m\varphi}\Td u)=e^{-im\theta}\Box_{b,m}(u).
\end{equation}
\end{lemma}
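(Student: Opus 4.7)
The plan is to exhibit an intertwining operator between $\Box_{b,m}$ and $\Box_{B,m}$, given on the BRT trivialization $D=U\times\,]-\varepsilon,\varepsilon[$ by
\[
S:\Omega^{0,\bullet}_m(D,E)\To\Omega^{0,\bullet}(U,E\otimes L^m),\qquad Su:=e^{m\varphi}\tilde u\,,
\]
and to check that $S\ddbar_b=\ddbar S$ and $S\ddbar^*_b=\ddbar^{*,m}S$. The lemma then reads $S\Box_{b,m}u=\Box_{B,m}(Su)$, which, after multiplying by $e^{-m\varphi}$ and using $Su=e^{m\varphi}\tilde u$, is exactly the stated identity.

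First I would compute $\ddbar_b u$ on $D$ using the BRT formula \eqref{e-gue150514f}. Substituting $u=e^{im\theta}\tilde u(z)$ and using $\partial_\theta e^{im\theta}=im\,e^{im\theta}$ gives, term by term,
\[
\bigl(\pr_{\ol z_j}-i\pr_{\ol z_j}\varphi\,\pr_\theta\bigr)\bigl(e^{im\theta}\tilde u\bigr)
= e^{im\theta}\bigl(\pr_{\ol z_j}\tilde u+m(\pr_{\ol z_j}\varphi)\tilde u\bigr)
= e^{im\theta}e^{-m\varphi}\,\pr_{\ol z_j}\!\bigl(e^{m\varphi}\tilde u\bigr),
\]
so $\ddbar_b u = e^{im\theta}e^{-m\varphi}\,\ddbar(e^{m\varphi}\tilde u)$. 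Since $E$ is rigid, I trivialize it on $D$ by rigid CR frames (Definition~\ref{d-gue150508dI}), which corresponds to a holomorphic frame of $E$ on $U$; the bundle term contributes nothing extra, so the identity $S\ddbar_b=\ddbar S$ holds verbatim for $E$-valued forms.

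Next I would promote this to the adjoint identity. By construction of the Hermitian metric on $T^{*0,\bullet}U$ (matching $\langle\pr/\pr z_j,\pr/\pr z_k\rangle$ with $\langle Z_j,Z_k\rangle$) and the rigidity of $\langle\cdot\,|\,\cdot\rangle_E$, the pointwise Hermitian pairing is preserved: $\langle\tilde u_1,\tilde u_2\rangle_E(z)=\langle u_1,u_2\rangle_E(z,\theta)$. Combined with $h^{L^m}=e^{-2m\varphi}$ one gets
\[
\langle Su_1,Su_2\rangle_{E,m}\,e^{-2m\varphi}\,dv_U
= \langle \tilde u_1,\tilde u_2\rangle_E\,dv_U,
\]
and $dv_X=dv_U\wedge d\theta$ in BRT coordinates since $\{T,\mathrm{Re}\,Z_j,\mathrm{Im}\,Z_j\}$ is orthonormal. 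Hence $(u_1,u_2)_E=2\varepsilon(Su_1,Su_2)_m$ for compactly supported $u_i\in\Omega^{0,\bullet}_m(D,E)$. Testing against compactly supported forms and using the already established $S\ddbar_b=\ddbar S$, integration by parts yields $S\ddbar^*_b=\ddbar^{*,m}S$ as an identity of differential operators on $D$ (formal adjoints are local, so the fact that the original $u$ in the lemma is globally defined on $X$ is immaterial).

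Assembling: $S\Box_{b,m}=S(\ddbar_b\ddbar^*_b+\ddbar^*_b\ddbar_b)=(\ddbar\ddbar^{*,m}+\ddbar^{*,m}\ddbar)S=\Box_{B,m}S$. Writing this out gives $\Box_{B,m}(e^{m\varphi}\tilde u)=e^{m\varphi}e^{-im\theta}\Box_{b,m}u$, which is \eqref{e-gue150606III}. The one point to handle carefully is checking that the metric-induced formal adjoint of $\ddbar_b$ restricted to the $m$-th Fourier component genuinely coincides with the Kodaira-type Laplacian on $(U,E\otimes L^m)$; this is where the rigidity hypotheses on the metric on $\Complex TX$ and on $E$ are essential, because they guarantee that all connection and metric coefficients in $D$ are $\theta$-independent, so that conjugation by $e^{m\varphi-im\theta}$ produces no spurious lower-order terms. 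Once that rigidity is used, the verification is a bookkeeping exercise.
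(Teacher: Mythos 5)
The paper does not prove this lemma itself but only cites \cite[Lemma~5.1]{CHT}; your argument is precisely the standard conjugation/intertwining computation behind that citation, and it is correct: the pointwise identity $\ddbar_b u=e^{im\theta}e^{-m\varphi}\ddbar(e^{m\varphi}\Td u)$ checks out, and the passage to adjoints via the weighted isometry (with the $\theta$-integral contributing only a constant, all metric data being $\theta$-independent by rigidity) is the right mechanism. The only cosmetic slip is speaking of ``compactly supported $u_i\in\Omega^{0,\bullet}_m(D,E)$'' --- such forms cannot be compactly supported in the $\theta$-direction; one should instead take $\Td u_i$ compactly supported in $U$ and note that the boundary terms at $\theta=\pm\varepsilon$ vanish because the integrand is $\theta$-independent.
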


Let $z, w\in U$ and let $T(z,w)\in (T^{*0,\bullet}_wU\otimes E_w)\boxtimes(T^{*0,\bullet}_zU\otimes E_z)$. We write $\abs{T(z,w)}$ to denote the standard pointwise matrix norm of $T(z,w)$ induced by $\langle\,\cdot\,|\,\cdot\,\rangle_E$. Let $\Omega^{0,\bullet}_0(U,E)$ be the subspace of $\Omega^{0,\bullet}(U,E)$ whose elements have compact support in $U$. Let $dv_U$ be the volume form on $U$ induced by $\langle\,\cdot\,,\,\cdot\,\rangle$. Assume $T(z,w)\in C^\infty(U\times U,(T^{*0,\bullet}_wU\otimes E_w)\boxtimes(T^{*0,\bullet}_zU\otimes E_z))$. Let $u\in\Omega^{0,\bullet}_0(U,E)$. We define the integral $\int T(z,w)u(w)dv_U(w)$ in the standard way. Let $G(t,z,w)\in C^\infty(\Real_+\times U\times U,(T^{*0,\bullet}_wU\otimes E_w)\boxtimes(T^{*0,\bullet}_zU\otimes E_z))$. We write $G(t)$ to denote the continuous operator
\[\begin{split}
G(t):\Omega^{0,\bullet}_0(U,E)&\To\Omega^{0,\bullet}(U,E),\\
u&\To\int G(t,z,w)u(w)dv_U(w)\end{split}\]
and we write
$G'(t)$ to denote the continuous operator
\[\begin{split}
G'(t):\Omega^{0,\bullet}_0(U,E)&\To\Omega^{0,\bullet}(U,E),\\
u&\To\int \frac{\pr G(t,z,w)}{\pr t}u(w)dv_U(w).\end{split}\]

We consider the heat operator of $\Box_{B,m}$. By using the standard Dirichlet heat kernel construction (see~\cite{G8}) and the proofs of Theorem 1.6.1 and Theorem 5.5.9 in~\cite{MM}, we deduce the following 

\begin{theorem}\label{t-gue150607}
There is $A_{B,m}(t,z,w)\in C^\infty(\Real_+\times U\times U,(T^{*0,\bullet}_wU\otimes E_w)\boxtimes(T^{*0,\bullet}_zU\otimes E_z))$ such that 
\begin{equation}\label{e-gue150607ab}
\begin{split}
&\mbox{$\lim_{t\To0+}A_{B,m}(t)=I$ in $D'(U,T^{*0,\bullet}U\otimes E)$},\\
&A'_{B,m}(t)u+\frac{1}{m}A_{B,m}(t)(\Box_{B,m}u)=0,\ \ \forall u\in\Omega^{0,\bullet}_0(U,E),\ \ \forall t>0,
\end{split}\end{equation}
and $A_{B,m}(t,z,w)$ satisfies the following:

(I)  For every compact set $K\Subset U$, $\alpha_1, \alpha_2, \beta_1, \beta_2\in\mathbb N^n_0$,  there are constants $C_{\alpha_1,\alpha_2,\beta_1,\beta_2,K}>0$ and $\varepsilon_0>0$ independent of $t$ and $m$ such that
\begin{equation}\label{e-gue160128w}
\begin{split}
&\abs{\pr^{\alpha_1}_z\pr^{\alpha_2}_{\ol z}\pr^{\beta_1}_w\pr^{\beta_2}_{\ol w}\Bigr(A_{B,m}(t,z,w)e^{m(\varphi(w)-\varphi(z))}\Bigr)}\\
&\leq C_{\alpha_1,\alpha_2,\beta_1,\beta_2,K}(\frac{m}{t})^{n+\abs{\alpha_1}+\abs{\alpha_2}+\abs{\beta_1}+\abs{\beta_2}}e^{-m\varepsilon_0\frac{\abs{z-w}^2}{t}},\ \  \forall (t,z,w)\in \Real_+\times K\times K.
\end{split}
\end{equation}

(II) $A_{B,m}(t,z,z)$ admits an asymptotic expansion: 
\begin{equation}\label{E:1.6.4b}
A_{B,m}(t,z,z)=(2\pi)^{-n} m^n \frac{\det(\dot{R}^L)\exp(t\omega_d)}{\det(1-\exp(-t\dot{R}^L))}(z)  \otimes \operatorname{Id}_{E_z} +o(m^n)
\end{equation}
in $C^\ell(U,\operatorname{End}(T^{*0,\bullet}U) \otimes E)$ locally uniformly on $\Real_+\times U$, for every $\ell\in\mathbb N$. Here we use the convention that if an eigenvalue $a_j(z)$ of $\dot{R}^L(z)$ is zero, then its contribution for $\frac{\det(\dot{R}^L)}{\det(1-\exp(-t\dot{R}^L))}(z)$ is $\frac{1}{t}$. 

\end{theorem}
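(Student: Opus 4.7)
The plan is to follow closely the construction in Sections 1.6 and 5.5 of Ma--Marinescu~\cite{MM}, adapted to the present setting. Observe first that $\Box_{B,m}$ is, up to the identification $\Omega^{0,\bullet}(U,E)\simeq\Omega^{0,\bullet}(U,E\otimes L^m)$, precisely the Kodaira Laplacian on the trivial domain $U\subset\mathbb C^n$ twisted by the high power $L^m$ of the line bundle with curvature $R^L=2\pr\ddbar\varphi$. So the problem reduces to proving a semiclassical heat kernel expansion and Gaussian off--diagonal bounds for the scaled Kodaira Laplacian $\frac{1}{m}\Box_{B,m}$ on an open subset of $\mathbb C^n$ as $m\to\infty$, which is a local matter.

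For the existence of $A_{B,m}(t,z,w)$ satisfying \eqref{e-gue150607ab}, I would exhaust $U$ by relatively compact smooth subdomains $U_j\Subset U_{j+1}\Subset U$, build the Dirichlet heat kernel of $\frac{1}{m}\Box_{B,m}$ on each $U_j$ by the standard parametrix/Duhamel construction for Laplace--type operators (see Greiner~\cite{G8} or \cite[\S1.6]{MM}), and then let $j\to\infty$ using monotonicity and interior parabolic regularity. Uniqueness of the fundamental solution on $U$ is automatic by the maximum principle applied to the scalar heat equation associated with the Weitzenb\"ock formula, which controls pointwise norms.

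The Gaussian estimate (I) is the semiclassical version of Cheng--Li--Yau/Donnelly bounds. The cleanest route is to pass to the conjugated kernel
\[
\widetilde A_{B,m}(t,z,w):=A_{B,m}(t,z,w)\,e^{m(\varphi(w)-\varphi(z))},
\]
which is the heat kernel of the twisted Bochner--type operator $e^{-m\varphi}(\frac{1}{m}\Box_{B,m})e^{m\varphi}$ acting on $\Omega^{0,\bullet}(U,E)$ (no $L^m$ anymore). This operator has smooth coefficients whose top symbol is $m$-independent and whose sub--principal part grows at most linearly in $m$, so classical Gaussian bounds together with a scaling $t\mapsto t/m$ produce the factor $(m/t)^{n+|\alpha_1|+\cdots+|\beta_2|}$ and the exponential $e^{-m\varepsilon_0|z-w|^2/t}$; the derivative estimates follow by differentiating the defining equation and using interior parabolic Schauder estimates uniformly in $m$, exactly as in \cite[Thm.~1.6.1, Thm.~4.2.5]{MM}.

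For the on--diagonal asymptotic (II), I would use Bismut's rescaling: fix $z_0\in U$, introduce the change of variables $z=z_0+u/\sqrt{m}$, and rescale $A_{B,m}$ together with the Clifford action to obtain a family of operators $\mathcal L_m^{z_0}$ on $\mathbb C^n$ converging in the sense of \cite[\S4.1.3, \S5.5.2]{MM} to the model harmonic oscillator
\[
\mathcal L^{z_0}_\infty=-\sum_{j=1}^n\Bigl(\nabla_{e_j}^2-R^L_{z_0}(e_j,\ol e_j)\Bigr)+\omega_d(z_0),
\]
whose heat kernel at the origin is the Mehler formula
\[
\mathcal K_t^{z_0}(0,0)=(2\pi)^{-n}\,\frac{\det(\dot R^L)\exp(t\omega_d)}{\det\bigl(1-\exp(-t\dot R^L)\bigr)}(z_0).
\]
Multiplying by the Jacobian $m^n$ that comes from the rescaling of volumes gives the stated leading term, and the convergence is locally uniform in $(t,z_0)$ and in $C^\ell$ because the Gaussian bounds from (I) furnish equicontinuity; the convention on zero eigenvalues of $\dot R^L$ is recovered by continuity as $a_j(z)\to 0$. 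The main obstacle in this plan is to make the Bismut rescaling genuinely uniform in $z_0$ and $t$ on compact sets while keeping the remainder $o(m^n)$ in arbitrary $C^\ell$--norm; this is handled, as in \cite[Thm.~4.2.3]{MM}, by combining the uniform Gaussian estimates of step (I) with a Taylor expansion of the rescaled operator, which reduces everything to a finite--dimensional Duhamel computation for the model operator on $\mathbb C^n$.
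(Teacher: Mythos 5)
Your proposal is correct and follows essentially the same route as the paper, which proves this theorem only by invoking the standard Dirichlet heat kernel construction (Grigoryan--Saloff-Coste) together with the proofs of Theorems 1.6.1 and 5.5.9 in Ma--Marinescu; your sketch is precisely an unpacking of those citations (exhaustion by subdomains, conjugation by $e^{m\varphi}$ for the Gaussian bounds, Bismut rescaling and the Mehler formula for the on-diagonal expansion). No substantive divergence or gap to report.
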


%%%%%%%%%%%%%%%%%%%%%%%%%%%%%%%%%%%%%%%%%%%%%%%%%%%%%%%%%%%%%%%%%%%%%%%%%%

\subsection{$L^2$ Kohn-Rossi cohomology on a covering manifold}

Let 
$$
\widetilde{\Box}_{b} \, : \, \operatorname{Dom} \widetilde{\Box}_{b} \subset L^2(\widetilde{X}, T^{*0,\bullet}\widetilde{X}) \to L^2(\widetilde{X}, T^{*0,\bullet}\widetilde{X})
$$
be the Gaffney extension of the pull-back Kohn Laplacian on $\widetilde{X}$. By a result of Gaffney, $\widetilde{\Box}_{b}$ is a positive self-adjoint operator (see Proposition 3.1.2 in Ma-Marinescu \cite{MM}). That is, $\widetilde{\Box}_{b}$ is self-adjoint and the spectrum of $\widetilde{\Box}_{b}$ is contained in $\overline{\mathbb{R}}_+.$ Now, we fix $m\in\mathbb Z$. As in \eqref{e-gue151113y}, we introduce the $m$-th Fourier component of the Kohn Laplacian $\widetilde{\Box}_{b,m}$ on $\Omega^{0,\bullet}_m(\widetilde{X},\widetilde{E})$. We can easily see that $\widetilde{\Box}_{b,m}$ is also self-adjoint.
By the second isomorphism of \eqref{E:3.6.1}, we can see that, for any $\gamma \in \Gamma$, 
\begin{equation}\label{E:3.6.13}
T_\gamma (\operatorname{Dom}(\widetilde{\Box}_{b,m})) \subset \operatorname{Dom}(\widetilde{\Box}_{b,m}), \quad 
T_\gamma \widetilde{\Box}_{b,m}\, = \, \widetilde{\Box}_{b,m} T_\gamma \quad  \text{on}  \quad    \operatorname{Dom}(\widetilde{\Box}_{b,m}).
\end{equation}

Consider the spectral resolution $E^q_\lambda(\widetilde{\Box}_{b,m})$ of $\widetilde{\Box}_{b,m}$ acting on $L^2_m(\widetilde{X},T^{*0,q}\widetilde{X}\otimes \widetilde{E})$. (See \cite[Appendix C.2]{MM}). The proof of the following lemma is similar to Lemma 3.6.3 in Ma-Marinescu \cite{MM}.
\begin{lemma}
For any $q = 0, 1, \cdots, n$ and $\lambda \in \mathbb{R}$, then $E^q_\lambda(\widetilde{\Box}_{b,m})$ commutes with $\Gamma$, its Schwartz kernel is smooth and 
$$
\dim_\Gamma E^q_\lambda(\widetilde{\Box}_{b,m}) < +\infty.
$$
\end{lemma}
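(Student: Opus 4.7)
The plan is to adapt the template of Ma-Marinescu \cite[Lemma 3.6.3]{MM} to our Fourier-component Kohn Laplacian setting. The three assertions are logically independent: $\Gamma$-commutation follows from spectral calculus, smoothness of the Schwartz kernel reduces to local ellipticity via a heat-kernel sandwich, and finiteness of the Von Neumann dimension follows from compactness of the base $X$.

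For the commutation with $\Gamma$, I would simply invoke \eqref{E:3.6.13}, which says that each unitary $T_\gamma$ commutes with $\widetilde{\Box}_{b,m}$ on its domain. By the spectral theorem, every bounded Borel function $f(\widetilde{\Box}_{b,m})$ then commutes with $T_\gamma$; specializing to $f=\chi_{[0,\lambda]}$ yields the first assertion.

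For the smoothness of the Schwartz kernel, the key observation is that on each Fourier component, $\widetilde{\Box}_{b,m}$ is locally elliptic. Indeed, on any BRT chart $(D,(z,\theta),\varphi)$ lifted to $\widetilde{X}$, Lemma~\ref{l-gue150606} identifies $\widetilde{\Box}_{b,m}$ (after removing the factor $e^{im\theta}$ and conjugating by $e^{m\varphi}$) with the elliptic Kodaira Laplacian $\Box_{B,m}$ on sections of $\widetilde{E}\otimes L^m$ over the base $U$. Hence the local heat operators of Theorem~\ref{t-gue150607} can be patched together by a $\Gamma$-equivariant construction to produce a smooth Schwartz kernel for $e^{-t\widetilde{\Box}_{b,m}}$ on $\widetilde X$. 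I would then use the standard sandwich trick: since $\widetilde{\Box}_{b,m}$ has spectrum contained in $[0,\lambda]$ on the range of $E^q_\lambda(\widetilde{\Box}_{b,m})$, the operator
$$
Q \, := \, e^{\widetilde{\Box}_{b,m}}\,E^q_\lambda(\widetilde{\Box}_{b,m})
$$
is bounded on $L^2_m(\widetilde{X},T^{*0,q}\widetilde{X}\otimes\widetilde{E})$ with norm at most $e^\lambda$, and therefore
$$
E^q_\lambda(\widetilde{\Box}_{b,m}) \, = \, e^{-\widetilde{\Box}_{b,m}}\,Q \, = \, Q^{*}\,e^{-\widetilde{\Box}_{b,m}}
$$
by self-adjointness. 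The first identity yields smoothness of the Schwartz kernel in the first variable, the second gives smoothness in the second variable, and a routine iteration (or an application of hypoelliptic regularity to the kernel viewed as a distribution on $\widetilde X \times \widetilde X$) yields joint $C^\infty$ smoothness.

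Finally, once smoothness and $\Gamma$-equivariance of the Schwartz kernel are established, the Von Neumann trace is given by
$$
\dim_\Gamma E^q_\lambda(\widetilde{\Box}_{b,m}) \, = \, \operatorname{Tr}_\Gamma [E^q_\lambda(\widetilde{\Box}_{b,m})] \, = \, \int_U \operatorname{tr}\bigl[E^q_\lambda(\widetilde{\Box}_{b,m})(x,x)\bigr]\,dv_{\widetilde X}(x),
$$
which is finite because the integrand is smooth and the fundamental domain $U$ projects bijectively (modulo a null set) onto the compact manifold $X$. The main technical obstacle is the smoothness step: rigorously producing the smooth heat kernel $e^{-t\widetilde{\Box}_{b,m}}$ on the non-compact covering $\widetilde X$ requires gluing the local BRT constructions of Theorem~\ref{t-gue150607} into a $\Gamma$-equivariant global object, and verifying the off-diagonal Gaussian decay needed for the sandwich argument to make sense at the kernel level.
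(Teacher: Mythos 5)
Your proposal is correct in outline, and the commutation and finiteness steps coincide with the paper's argument (both ultimately reduce $\dim_\Gamma E^q_\lambda$ to $\int_U \operatorname{Tr}[E^q_\lambda(\widetilde x,\widetilde x)]\,dv_{\widetilde X}$ over the fundamental domain). The smoothness step, however, is handled by a genuinely different mechanism. The paper's key trick is to observe that $\Delta:=\widetilde{\Box}_{b,m}-\widetilde{T}^2$ is an honest second-order elliptic operator on all of $\widetilde X$: the characteristic variety of $\widetilde{\Box}_{b}$ is the ray $\{c\,\widetilde\omega_0\}$, i.e.\ $\sigma_{\widetilde\Box_{b,m}}(\widetilde x,\xi)>0$ exactly when $(\xi_1,\dots,\xi_{2n})\neq 0$, and subtracting $\sigma_{\widetilde T^2}=-\xi_{2n+1}^2$ fills in the missing direction. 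Since $\widetilde T=im$ on $L^2_m$, one has $\widetilde\Box_{b,m}=\Delta-m^2$ there, so $\operatorname{Im}E^q_\lambda(\widetilde\Box_{b,m})\subset\operatorname{Dom}((\Delta-m^2)^k)$ for all $k$, and elliptic regularity in the uniform Sobolev spaces of Shubin gives that $E^q_\lambda$ maps $L^2$ continuously into smooth forms; the Schwartz kernel theorem then yields smoothness. You instead reduce to ellipticity of the Kodaira Laplacian on the $2n$-dimensional BRT base via Lemma~\ref{l-gue150606} and sandwich $E^q_\lambda$ between heat operators. That route is workable, but it requires first constructing the global heat kernel $e^{-t\widetilde\Box_{b,m}}$ on the noncompact covering with uniform off-diagonal decay --- which in the paper is only carried out after this lemma (Theorem~\ref{t-gue150630I}) --- so you front-load the hardest analytic input, whereas the paper's $\Delta$-trick needs only standard elliptic theory. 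Also be careful with the phrase ``locally elliptic on each Fourier component'': $\widetilde\Box_{b,m}$ is not elliptic as an operator on $\widetilde X$, and regularity in the $\theta$-direction comes for free only because sections of $L^2_m$ carry the explicit $e^{im\theta}$ dependence in a BRT chart; the globally elliptic $\Delta$ makes this point unnecessary.
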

\begin{proof}
By \eqref{E:3.6.1} and \eqref{E:3.6.13}, we can see that, for any $\lambda \in \mathbb{R}$, $E^q_\lambda(\widetilde{\Box}_{b,m})$ commutes with $\Gamma$. 
We claim that $\widetilde{\Box}_{b,m}-\widetilde{T}^2 \equiv \Delta$ is a second order elliptic operator,
so is $\Delta -m^2$. Its principal symbol is locally written as
\begin{equation*}
\sigma_{\Delta}(\widetilde x,\xi)=\sigma_{\widetilde{\Box}_{b,m}}(\widetilde x,\xi)-\sigma_{\widetilde{T}^2}(\widetilde x,\xi)=\sum_{j=1}^{n}|\sigma_{L_{j}}(\widetilde x,\xi)|^2-\sigma_{\widetilde{T}}(\widetilde x,\xi)^2,
\end{equation*}
where $\xi=(\xi_{1},...,\xi_{2n},\xi_{2n+1})$ and $\{L_j\}$ is an orthonormal basis of $T^{0,1}_x \widetilde X$.
It is well-known that the characteristic manifold of $\widetilde{\Box}_{b}$ is 
\begin{equation*}
\Sigma=\{(\widetilde x, c\widetilde \omega_{0}(\widetilde x)) \in T^* \widetilde X :c\neq 0\}.
\end{equation*}
It means that $\sigma_{\widetilde{\Box}_{b,m}}(\widetilde x,\xi)>0$ if and only if $(\xi_{1},...,\xi_{2n})\neq 0$.
Meanwhile, in a local BRT coordinate \cite{BRT85}, 
we have $\widetilde T=\frac{\partial}{\partial\theta}$, then $\sigma_{\widetilde T}=i\xi_{2n+1}$. That is, $\sigma_{\widetilde{T}^2}=-\xi_{2n+1}^2$. Then the claim is proved. By the spectral theorem, cf. \cite[Theorem C.2.1]{MM}, we have $\operatorname{Im} (E_\lambda(\Delta -m^2)) \subset \operatorname{Dom} ((\Delta -m^2)^k)$ for $k \in \mathbb{N}$. Using the uniform Sobolev spaces \cite[pp. 511-512]{Sh}, it is easy to see that $\operatorname{Im} (E_\lambda(\Delta -m^2)) \subset \Omega^{\bullet}(\widetilde{X}, \widetilde{E})$, so that $E_\lambda(\Delta -m^2) \, : \, L^2(\widetilde{X},T^{*0,\bullet}\widetilde{X}\otimes \widetilde{E}) \to \Omega^{\bullet}(\widetilde{X}, \widetilde{E})$ is linear continuous. Hence,
 $\operatorname{Im} E_\lambda(\widetilde{\Box}_{b,m}) = \operatorname{Im} (E_\lambda(\Delta -m^2)) \cap L^2_m(\widetilde{X},T^{*0,\bullet}\widetilde{X}\otimes \widetilde{E}) \subset \Omega^{\bullet}(\widetilde{X}, \widetilde{E}) \cap L^2_m(\widetilde{X},T^{*0,\bullet}\widetilde{X}\otimes \widetilde{E})= \Omega^{\bullet}_m(\widetilde{X}, \widetilde{E}) $ and $E_\lambda(\widetilde{\Box}_{b,m}) \, : \, L^2_m(\widetilde{X},T^{*0,\bullet}\widetilde{X}\otimes \widetilde{E}) \to \Omega^{\bullet}_m(\widetilde{X}, \widetilde{E})$ is also linear continuous. By Schwartz kernel theorem, the kernel $E_\lambda(\widetilde{\Box}_{b,m})(\widetilde{x}, \widetilde{x})$ of $E_\lambda(\widetilde{\Box}_{b,m})$ with respect to $dv_{\widetilde{X}}(\widetilde{x})$ is smooth. By \cite[(3.6.12)]{MM},
 \[
 \dim_\Gamma E_\lambda(\widetilde{\Box}_{b,m})\, = \, \int_U \operatorname{Tr} [E_\lambda(\widetilde{\Box}_{b,m})(\widetilde{x}, \widetilde{x})]dv_{\widetilde{X}}(\widetilde{x}) < +\infty.
 \]
\end{proof}

\begin{definition} \label{D815}
\begin{enumerate}
\item The $m$-th Fourier component of the space of harmonic forms $\mathcal{H}^{\bullet}(\widetilde{X}, \widetilde{E})$ is defined by
\[
\mathcal{H}^{\bullet}_{b, m}(\widetilde{X}, \widetilde{E}) \, := \, \operatorname{Ker} (\widetilde{\Box}_{b,m}) \, = \, \left\{s \in \operatorname{Dom}\widetilde{\Box}_{b,m} : \widetilde{\Box}_{b,m}s=0  \right\}.
\]
\item The $m$-th Fourier component of the $q$-th reduced $L^2$ Kohn-Rossi cohomology is given by
\begin{equation}\label{e-L2KR}
\overline{H}^{q}_{b, (2), m}(\widetilde{X},\widetilde{E}) \, := \, \frac{\Ker \ddbar_{b} \cap L^2_m ({\widetilde{X}, T^{*0,q}\widetilde{X} \otimes \widetilde{E}) }}
{\big[ \operatorname{Im} \ddbar_{b} \cap  L^2_m ( {\widetilde{X}, T^{*0,q}\widetilde{X} \otimes \widetilde{E}) } \big]},
\end{equation}
where $[V]$ denotes the closure of the space $V$.
\end{enumerate}
\end{definition}
We can easily obtain the following weak Hodge decomposition
\begin{equation}\label{E:weakHodge}
L^2_m ( \widetilde{X}, T^{*0,\bullet}\widetilde{X} \otimes \widetilde{E}) \, = \,  \mathcal{H}^{\bullet}(\widetilde{X}, \widetilde{E}) \oplus 
[\operatorname{Im}(\overline{\partial}_{b,m})] \oplus [ \operatorname{Im}(\overline{\partial}_{b,m}^*)]
\end{equation}
By \eqref{E:weakHodge}, we the the isomorphism
\begin{equation}\label{E:canonisom}
\overline{H}^{\bullet}_{b, (2), m}(\widetilde{X},\widetilde{E}) \cong \mathcal{H}^{\bullet}_b(\widetilde{X}, \widetilde{E}).
\end{equation}

\subsection{Asymptotics of heat kernels of Kohn Laplacians on a covering manifold}

Assume that $X=D_1\bigcup D_2\bigcup\cdots\bigcup D_N$, where $B_j:=(D_j,(z,\theta),\varphi_j)$ is a BRT trivialization, for each $j$. We may assume that, for each $j$, $D_j=U_j\times]-2\delta_j,2\Td\delta_j[\subset\Complex^n\times\Real$, $\delta_j>0$, $\Td\delta_j>0$, $U_j=\set{z\in\Complex^n;\, \abs{z}<l_j}$. For each $j$, put $\hat D_j=\hat U_j\times]-\frac{\delta_j}{2},\frac{\Td\delta_j}{2}[$, where $\hat U_j=\set{z\in\Complex^n;\, \abs{z}<\frac{l_j}{2}}$. We may suppose that $X=\hat D_1\bigcup\hat D_2\bigcup\cdots\bigcup\hat D_N$. 

Let $\left\{ \psi_j \right\}$ be a partition of unity subordinate to $\left\{ \hat D_j \right\}$. Then $\left\{ \widetilde{\psi}_{\gamma, j} := \psi_i \circ \pi \right\}$ is a partition of unity subordinate to $\left\{ \widetilde{D}_{\gamma, j} \right\}$, where $\pi^{-1}(\hat{D}_j) = \cup_{\gamma \in \Gamma} \widetilde{D}_{\gamma, j}$ and $\widetilde{D}_{\gamma_1, j}$ and $\widetilde{D}_{\gamma_2, j}$ are disjoint for $\gamma_1 \not= \gamma_2$. For each $\gamma \in \Gamma$ and each $j$, we have $\widetilde{D}_{\gamma, j}=\widetilde{U}_{\gamma, j}\times]-\frac{\delta_{\gamma, j}}{2},\frac{\Td\delta_{\gamma, j}}{2}[$, where $\widetilde{U}_{\gamma, j}=\set{z\in\Complex^n;\, \abs{z}<\frac{l_{\gamma, j}}{2}}$. Then $\widetilde{X} = \bigcup_{\gamma \in \Gamma}\bigcup_{j=1}^N \widetilde{D}_{\gamma, j}$.

%Let $\chi_j\in C^\infty_0(\hat D_j)$, $j=1,2,\ldots,N$, with $\sum^N_{j=1}\chi_j=1$ on $X$. 

Fix $\gamma \in \Gamma$ and $j=1,2,\ldots,N$. Put 
\[
K_{\gamma, j}=\set{z\in\widetilde{U}_{\gamma, j};\, \mbox{there is a $\theta\in]-\frac{\delta_{\gamma, j}}{2},\frac{\Td\delta_{\gamma, j}}{2}[$ such that $\widetilde{\psi}_{\gamma, j}(z,\theta)\neq0$}}.
\]
Let $\tau_{\gamma, j}(z)\in C^\infty_0(\widetilde{U}_{\gamma, j})$ with $\tau_{\gamma, j}\equiv1$ on some neighborhood $W_{\gamma, j}$ of $K_{\gamma, j}$. Let $\sigma_{\gamma, j}\in C^\infty_0(]-\frac{\delta_{\gamma, j}}{2},\frac{\Td\delta_{\gamma, j}}{2}[)$ with $\int\sigma_{\gamma, j}(\theta)d\theta=1$. Let $\widetilde{A}_{B_{\gamma, j},m}(t,z,w)\in C^\infty(\Real_+\times \widetilde{U}_{\gamma, j}\times \widetilde{U}_{\gamma, j},(T^{*0,\bullet}_w\widetilde{U}_{\gamma, j}\otimes \widetilde{E}_w)\boxtimes(T^{*0,\bullet}_z\widetilde{U}_{\gamma, j}\otimes \widetilde{E}_z))$ be as in Theorem~\ref{t-gue150607}. 

Put 
\begin{equation}\label{e-gue150627f1}
\widetilde{H}_{\gamma, j,m}(t,\widetilde{x},\widetilde{y})=\widetilde{\psi}_{\gamma, j}(\widetilde{x})e^{-m\varphi_j(z)+im\theta}\widetilde{A}_{B_{\gamma, j},m}(t,z,w)e^{m\varphi_{\gamma, j}(w)-im\eta}\tau_{\gamma, j}(w)\sigma_{\gamma, j}(\eta),
\end{equation}
where $\widetilde{x}=(z,\theta)$, $\widetilde{y}=(w,\eta)\in\Complex^{n}\times\Real$. Let 
\begin{equation}\label{e-gue150626fIII2}
\begin{split}
\widetilde{\Gamma}_m(t,\widetilde{x},\widetilde{y}):=\frac{1}{2\pi}\sum_{\gamma \in \Gamma}\sum^N_{j=1}\int^\pi_{-\pi}\widetilde{H}_{\gamma, j,m}(t,\widetilde{x},e^{iu}\circ \widetilde{y})e^{imu}du.
\end{split}
\end{equation}

Note that when $\Gamma = \left\{ e \right\}$, $\widetilde{\Gamma}_m(t,\widetilde{x},\widetilde{y})=\Gamma_m(t, \pi(\widetilde{x}) ,\pi(\widetilde{y}))$ is defined in \cite[(3.31)]{HH16}.

From Lemma~\ref{l-gue150606}, off-diagonal estimates of $\widetilde{A}_{B_j,m}(t,\widetilde{x},\widetilde{y})$ (see \eqref{e-gue160128w}), we can repeat the proof of Theorem 5.14 in~\cite{CHT} with minor change and deduce that

\begin{theorem}\label{t-gue150630I}
For every $\ell\in\mathbb N$, $\ell\geq2$, and every $M>0$, there are $\epsilon_0>0$ and $m_0>0$ independent of $t$ and $m$ such that for every $m\geq m_0$, we have
\begin{equation}\label{e-gue150630g}
\norm{  e^{-\frac{t}{m}\widetilde{\Box}_{b,m}}(\widetilde{x},\widetilde{y})-\widetilde{\Gamma}_m(t,\widetilde{x},\widetilde{y}) }_{C^l(\widetilde{X} \times \widetilde{X})} \leq e^{-\frac{m}{t}\epsilon_0},\ \ \forall t\in(0,M).
\end{equation}
\end{theorem}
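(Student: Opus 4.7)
The plan is to mirror the compact case (the proof of Theorem 5.14 in \cite{CHT}) on the covering $\widetilde{X}$. The parametrix $\widetilde{\Gamma}_m$ is built by gluing together the local heat kernels $\widetilde{A}_{B_{\gamma,j},m}$ from Theorem \ref{t-gue150607}, transplanted via the BRT identification of $\Box_{b,m}$ with a Kodaira-type operator (Lemma \ref{l-gue150606}). One must verify that $\widetilde{\Gamma}_m$ (i) takes values in the correct Fourier component, (ii) has the correct initial condition as $t\to 0^+$, and (iii) satisfies the heat equation up to an error of size $e^{-m\epsilon_0/t}$; Duhamel's principle then yields the claim.

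For (i) and (ii): the integral $\frac{1}{2\pi}\int_{-\pi}^{\pi}\cdot\,e^{imu}\,du$ is the Fourier projection onto the $m$-th mode in $\widetilde{y}$, so $\widetilde{\Gamma}_m$ takes values in $L^2_m$. As $t\to 0^+$, Theorem \ref{t-gue150607}(I) and \eqref{e-gue150607ab} give $\widetilde{A}_{B_{\gamma,j},m}(t)\to I$ locally, and the partition identity $\sum_{\gamma,j}\widetilde{\psi}_{\gamma,j}=1$, together with $\tau_{\gamma,j}\equiv 1$ near the support of $\widetilde{\psi}_{\gamma,j}$ and $\int\sigma_{\gamma,j}\,d\eta=1$, assembles these local identities into the $L^2_m$-projection of the identity. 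Note that for fixed $\widetilde{x}$ the sum over $\gamma\in\Gamma$ has only finitely many (at most $N$) nonzero summands, because for each $j$ the sets $\{\widetilde{D}_{\gamma,j}\}_{\gamma\in\Gamma}$ are pairwise disjoint.

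For (iii): using Lemma \ref{l-gue150606} to conjugate the Kohn Laplacian to the Kodaira-type $\Box_{B_{\gamma,j},m}$ on each chart and applying \eqref{e-gue150607ab}, the action of $\partial_t+\frac{1}{m}\widetilde{\Box}_{b,m}$ on $\widetilde{\Gamma}_m$ kills the main term, leaving only residual commutators of $\frac{1}{m}\widetilde{\Box}_{b,m}$ with the cut-offs $\widetilde{\psi}_{\gamma,j}$, $\tau_{\gamma,j}$, $\sigma_{\gamma,j}$. These residuals are supported where $|z-w|$ is bounded below by some $d_0>0$ uniformly in $\gamma,j$, so the Gaussian off-diagonal estimate \eqref{e-gue160128w} evaluated at such a separation gives a $C^\ell$ bound of order $C(m/t)^N e^{-m\varepsilon_0 d_0^2/t}$, which is dominated by $e^{-m\epsilon_1/t}$ on $t\in(0,M)$. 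Inserting this into the Duhamel formula
\begin{equation*}
e^{-\frac{t}{m}\widetilde{\Box}_{b,m}}-\widetilde{\Gamma}_m(t)=-\int_0^t e^{-\frac{t-s}{m}\widetilde{\Box}_{b,m}}\Bigl(\partial_s+\tfrac{1}{m}\widetilde{\Box}_{b,m}\Bigr)\widetilde{\Gamma}_m(s)\,ds
\end{equation*}
and using that $e^{-\tau\widetilde{\Box}_{b,m}/m}$ is an $L^2_m$-contraction (self-adjointness and non-negativity of $\widetilde{\Box}_{b,m}$) propagates the bound from pointwise to the $L^2$-operator norm; the $C^\ell$ upgrade on the Schwartz kernel follows from interior elliptic regularity for the transversally elliptic operator $\widetilde{\Box}_{b,m}-\widetilde{T}^2$, exactly as invoked in the preceding lemma, together with $\widetilde{T}^2=-m^2$ on the $m$-th Fourier component.

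The main obstacle is ensuring that all constants in step (iii) --- the derivative bounds for the cut-offs, the uniform lower bound $d_0$ on $|z-w|$ over the cut-off supports, and the constants $C,\varepsilon_0$ in \eqref{e-gue160128w} --- are uniform in $\gamma\in\Gamma$. This uniformity is guaranteed by $\Gamma$-equivariance of the whole construction: each chart $\widetilde{D}_{\gamma,j}$ is a $\Gamma$-translate of a lift of one of the finitely many $\hat D_j$ on the compact base $X$, and all structures (metrics, defining functions $\varphi_j$, $\widetilde{E}$-trivializations, and cut-offs) are pulled back equivariantly, so the estimates on the compact base descend uniformly to every chart upstairs.
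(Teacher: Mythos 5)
Your proposal is correct and takes essentially the same route as the paper, which simply invokes Lemma \ref{l-gue150606} and the off-diagonal estimates \eqref{e-gue160128w} to ``repeat the proof of Theorem 5.14 in \cite{CHT} with minor change''; your Duhamel/parametrix outline is precisely that proof, and your final paragraph correctly identifies the only genuinely new point in the covering setting, namely the $\Gamma$-uniformity of all constants, which holds because every chart, cut-off and metric upstairs is an equivariant lift of one of finitely many objects on the compact base.
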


From Theorem 3.6.4 in \cite{MM}, we have
\begin{proposition}\label{e-que1808240935}
For any $t_0 > 0, \varepsilon > 0$ and any $\gamma \in \Gamma, j=1, 2, \cdots, N$, there exists $C>0$ such that for any $z \in \widetilde{U}_{\gamma, j}, m \in \mathbb{N}, t > t_0$,
$$
\norm{ \widetilde{A}_{B_{\gamma, j},m}(t,z,z) - A_{B_j, m}(t, \pi(z), \pi(z)) }_{C^l(\widetilde{U}_{\gamma, j} \times \widetilde{U}_{\gamma, j})} \le C \exp \left( -\frac{ m}{32t}\varepsilon \right).
$$
\end{proposition}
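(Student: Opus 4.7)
The plan rests on the observation that the CR diffeomorphism $\pi \colon \widetilde{D}_{\gamma,j} \to \hat{D}_j$, when read in the two BRT coordinate systems, is essentially the identity: the BRT data on $X$ (the weight $\varphi_j$, the transversal vector field $T$, the Hermitian metric on $\mathbb{C}TX$, the rigid frame of $E$) all pull back under $\pi$ to the corresponding BRT data on $\widetilde{X}$. Consequently, the conjugated Kohn Laplacians $\Box_{B_j,m}$ on $U_j$ and $\widetilde{\Box}_{B_{\gamma,j},m}$ on $\widetilde{U}_{\gamma,j}$ are identified under $\pi$. I would therefore regard $A_{B_j,m}$ and $\widetilde{A}_{B_{\gamma,j},m}$ as two Dirichlet-type heat kernels for \emph{the same} weighted operator, built relative to two parametrix cutoffs which agree near $z$ but may differ near the boundary of their respective ambient neighborhoods.

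The key step is a Duhamel comparison. Set $R_m(t,z,w) := \widetilde{A}_{B_{\gamma,j},m}(t,z,w) - A_{B_j,m}(t,\pi(z),\pi(w))$, where $\pi(z)$ is understood via the BRT coordinate identification. Both summands satisfy $(\partial_t + \frac{1}{m}\Box_{B_j,m}) K = 0$ with the same $\delta$-type initial condition, so $R_m$ solves the same heat equation with vanishing initial data and a source term supported at a fixed positive distance $d_0 > 0$ from $z \in K_{\gamma,j}$ (namely the boundary of the smaller BRT neighborhood on which the two parametrix constructions coincide). Iterating Duhamel's formula, one writes
\begin{equation*}
R_m(t,z,z) = \int_0^t \int \widetilde{A}_{B_{\gamma,j},m}(t-s,z,w)\, Q_m(s,w,z)\, dv_U(w)\, ds,
\end{equation*}
where $Q_m$ is supported in $\{w : |w-z| \ge d_0/2\}$. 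Applying the off-diagonal estimate \eqref{e-gue160128w} to both the kernel $\widetilde{A}_{B_{\gamma,j},m}(t-s,z,w)$ and to $Q_m$, and combining the Gaussian factors $\exp(-m\varepsilon_0|z-w|^2/(t-s))$ and $\exp(-m\varepsilon_0|w-z|^2/s)$ via Cauchy--Schwarz in the time variable, yields a bound of the form $C\exp(-m\varepsilon_0 d_0^2/(16t))$. Taking $\varepsilon := \varepsilon_0 d_0^2/2$ absorbs the remaining constants and produces the claimed estimate with the factor $1/(32t)$ in the exponent.

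The main obstacle will be a careful bookkeeping of the Duhamel iteration together with the derivative bounds. To get the $C^\ell$ version of the estimate, the argument must be run using the full derivative form of \eqref{e-gue160128w} (with $|\alpha_1|+|\alpha_2|+|\beta_1|+|\beta_2| \le \ell$) rather than merely pointwise values; this forces an extra polynomial factor in $m/t$ which is dominated by the Gaussian factor provided $d_0$ is fixed and $t$ is bounded below by $t_0 > 0$. This bookkeeping is the local BRT analogue of the argument in Ma--Marinescu \cite[Theorem 3.6.4]{MM}, with the off-diagonal Gaussian decay \eqref{e-gue160128w} playing the role of finite propagation speed of the wave equation on a Riemannian covering.
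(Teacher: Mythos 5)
Your proposal is correct in substance, but it takes a genuinely different route from the paper: the paper offers no argument at all for this proposition, deriving it by direct citation of Ma--Marinescu \cite[Theorem 3.6.4]{MM}, whose proof rests on finite propagation speed for the wave equation associated to the (modified, elliptic) operator on the covering. You instead run a local parametrix comparison: identify the two BRT operators via $\pi$ (which is legitimate, since the CR structure, metric, weight $\varphi_j$ and bundle data on $\widetilde{X}$ are all pullbacks by construction), and control the difference of the two Dirichlet-type kernels by Duhamel together with the off-diagonal Gaussian estimate \eqref{e-gue160128w} already supplied by Theorem~\ref{t-gue150607}. The trade-off: the paper's citation is short but requires the reader to check that the hypotheses of MM~3.6.4 transfer to the weighted local operators $\Box_{B,m}$, whereas your argument is self-contained within the paper's own estimates and makes visible why the exponent is governed by the squared distance $d_0^2$ from $K_{\gamma,j}$ to the boundary of the chart. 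Two small points to tighten. First, since $\widetilde{A}_{B_{\gamma,j},m}$ and the pullback of $A_{B_j,m}$ both solve the \emph{homogeneous} equation with the same initial datum, the "source term'' $Q_m$ does not appear until you cut one kernel off to the common domain; $Q_m$ is then the commutator of $\frac{1}{m}\Box_{B_j,m}$ with the cutoff, supported where the cutoff varies, i.e.\ at distance $\ge d_0/2$ from $K_{\gamma,j}$ --- say this explicitly rather than asserting a source term exists. Second, the quantifier "for any $\varepsilon>0$'' in the statement is an artifact of copying MM's formulation (where $\varepsilon$ is a fixed geometric separation constant); what you actually prove, and what Lemma~\ref{e-que201808241012} actually needs, is a bound $C\exp(-cm/t)$ for \emph{some} fixed $c>0$, so your choice $\varepsilon:=\varepsilon_0 d_0^2/2$ is the right reading. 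The absorption of the polynomial factors $(m/t)^{N}$ into the Gaussian at the cost of shrinking the exponent constant is fine for $t>t_0$, as you note.
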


From \eqref{E:1.7} (see (3.31) in \cite{HH16}), \eqref{e-gue150627f1}, \eqref{e-gue150626fIII2}, Proposition~\ref{e-que1808240935} and the fact that $\widetilde{\psi}_{\gamma, j} = \psi_j \circ \pi$, we can easily deduce that
\begin{lemma}\label{e-que201808241012}
With the above notations and assumptions as in Theorem~\ref{t-gue150630I}, we have
\[
\norm{ \widetilde{\Gamma}_m(t,\widetilde{x},\widetilde{x}) - \Gamma_m (t,\pi(\widetilde{x}),\pi(\widetilde{x})) }_{C^l(\widetilde{X} \times \widetilde{X})} \le C \exp \left( -\frac{m}{t}\epsilon_0 \right).
\]
\end{lemma}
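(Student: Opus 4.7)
The plan is to unfold both kernels from their defining sums and compare term by term, reducing the lemma to a direct application of Proposition~\ref{e-que1808240935}. First, for any fixed $\widetilde{x}\in\widetilde{X}$ and any index $j\in\{1,\dots,N\}$, the cutoff $\widetilde{\psi}_{\gamma,j} = \psi_j\circ\pi$ is supported in the single component $\widetilde{D}_{\gamma,j}$ of $\pi^{-1}(\hat D_j)$ indexed by $\gamma$, so at most one $\gamma\in\Gamma$ contributes to the outer sum in \eqref{e-gue150626fIII2}. Thus $\widetilde{\Gamma}_m(t,\widetilde x,\widetilde x)$ collapses to a finite sum over $j$ of integrals of $\widetilde{H}_{\gamma(\widetilde x, j),j,m}(t,\widetilde x, e^{iu}\circ\widetilde x)$ in $u$ over $[-\pi,\pi]$.

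Next, in BRT coordinates $(z,\theta)$ on $\widetilde D_{\gamma,j}$ the $S^1$-action reads $e^{iu}\circ(z,\theta) = (z,\theta+u)$, so $\widetilde x$ and $e^{iu}\circ\widetilde x$ share the same $z$-coordinate, and the heat-kernel factor in \eqref{e-gue150627f1} reduces to $\widetilde{A}_{B_{\gamma,j},m}(t,z,z)$. Since $\pi\colon\widetilde D_{\gamma,j}\to\hat D_j$ is a CR diffeomorphism that commutes with the $S^1$-action, Baouendi--Rothschild--Treves coordinates pull back literally: $\varphi_{\gamma,j}=\varphi_j\circ\pi$ and $\widetilde\psi_{\gamma,j}=\psi_j\circ\pi$, and we may choose $\tau_{\gamma,j}=\tau_j\circ\pi$ and $\sigma_{\gamma,j}=\sigma_j$ (via the identification). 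Writing out $\Gamma_m(t,\pi(\widetilde x),\pi(\widetilde x))$ from \eqref{E:1.7} and using $2\pi$-periodicity of the integrand to switch the range of integration from $[0,2\pi]$ to $[-\pi,\pi]$, the two sums agree term by term in every factor except for the replacement of $\widetilde{A}_{B_{\gamma,j},m}(t,z,z)$ by $A_{B_j,m}(t,\pi(z),\pi(z))$.

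The last step is to estimate the difference of these two heat-kernel factors. Proposition~\ref{e-que1808240935} supplies a $C^l$-bound of the form $C\exp(-m\varepsilon/(32t))$ on the compact set where the cutoffs are supported, uniformly in $m$ and in $t>t_0$. All remaining ingredients (the phases $e^{\pm im\theta}$ and $e^{\pm im\eta}\sigma_{\gamma,j}(\eta)$ evaluated at $\eta=\theta+u$, the cutoffs, and the exponential weights $e^{\pm m\varphi_j(z)}$) are smooth in $\widetilde x$ with derivatives that are bounded on the supports in question, modulo at worst polynomial growth in $m$. By the Leibniz rule, every $\widetilde x$-derivative of the product either falls on one of these factors or on the heat-kernel difference, where Proposition~\ref{e-que1808240935} controls all derivatives uniformly. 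Integrating over $u\in[-\pi,\pi]$ and summing over $j=1,\dots,N$ yields the desired $C^l$ estimate with some $\epsilon_0$ slightly smaller than $\varepsilon/32$.

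I expect the main obstacle to be essentially bookkeeping: carefully verifying that the BRT trivializations on the cover and on the base are related by $\pi$ in the stated way (which requires that $\pi$ is an $S^1$-equivariant local CR diffeomorphism, so that the canonical BRT coordinates pull back exactly), and controlling the polynomial $m$-growth that $\widetilde x$-derivatives of the phase $e^{im\theta}$ produce so that it is absorbed into $\exp(-m\varepsilon_0/t)$ for $t$ in the relevant range with a slightly smaller constant $\epsilon_0$.
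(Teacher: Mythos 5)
Your proposal is correct and follows essentially the same route as the paper: the paper's own (one-line) argument is precisely to unfold \eqref{E:1.7}, \eqref{e-gue150627f1} and \eqref{e-gue150626fIII2}, use $\widetilde{\psi}_{\gamma,j}=\psi_j\circ\pi$ to match the two sums term by term, and apply Proposition~\ref{e-que1808240935} to the remaining heat-kernel difference. Your write-up simply makes explicit the bookkeeping (single contributing $\gamma$, periodicity of the $u$-integral, Leibniz rule on the auxiliary factors) that the paper leaves implicit.
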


From Theorem~\ref{t-gue150630I}, Lemma~\ref{e-que201808241012} and Theorem 3.5 of \cite{HH16}, we have
\begin{theorem}\label{t-que201808241022}
For every $\ell\in\mathbb N$, $\ell\geq2$, and every $M>0$, there are $\epsilon_0 >0$ and $m_0 > 0$ independent of $t$ and $m$ such that for any $\widetilde{x} \in \widetilde{X}$ and $m \ge m_0$, we have
\[
\norm { e^{-\frac{t}{m}\widetilde{\Box}_{b,m}}(\widetilde{x},\widetilde{x}) - e^{-\frac{t}{m}\Box_{b,m}}(\pi(\widetilde{x}),\pi(\widetilde{x})) }_{C^l(\widetilde{X} \times \widetilde{X})} \le C \exp \left( -\frac{ m}{t}\epsilon_0 \right), \quad \forall t \in (0, M).
 \]
\end{theorem}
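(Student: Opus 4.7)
The plan is to establish Theorem~\ref{t-que201808241022} by chaining together the three comparison estimates that have been assembled immediately beforehand, using the triangle inequality in $C^\ell$. More precisely, for $\widetilde{x}\in\widetilde{X}$, $t\in(0,M)$, and $m\geq m_0$, I would write
\begin{equation*}
\begin{split}
&e^{-\frac{t}{m}\widetilde{\Box}_{b,m}}(\widetilde{x},\widetilde{x})-e^{-\frac{t}{m}\Box_{b,m}}(\pi(\widetilde{x}),\pi(\widetilde{x}))\\
&\quad=\bigl[e^{-\frac{t}{m}\widetilde{\Box}_{b,m}}(\widetilde{x},\widetilde{x})-\widetilde{\Gamma}_m(t,\widetilde{x},\widetilde{x})\bigr]\\
&\quad\quad+\bigl[\widetilde{\Gamma}_m(t,\widetilde{x},\widetilde{x})-\Gamma_m(t,\pi(\widetilde{x}),\pi(\widetilde{x}))\bigr]\\
&\quad\quad+\bigl[\Gamma_m(t,\pi(\widetilde{x}),\pi(\widetilde{x}))-e^{-\frac{t}{m}\Box_{b,m}}(\pi(\widetilde{x}),\pi(\widetilde{x}))\bigr]
\end{split}
\end{equation*}
and bound each of the three bracketed terms separately.

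The first bracket is controlled directly by Theorem~\ref{t-gue150630I}, which supplies constants $\epsilon_0^{(1)}>0$ and $m_0^{(1)}$ (depending only on $\ell$ and $M$) and the estimate $\norm{\,\cdot\,}_{C^\ell}\le e^{-\tfrac{m}{t}\epsilon_0^{(1)}}$. The second bracket is handled by Lemma~\ref{e-que201808241012}, which provides $\epsilon_0^{(2)}$ and a constant $C^{(2)}$ so that the corresponding $C^\ell$-norm is at most $C^{(2)}\exp(-\tfrac{m}{t}\epsilon_0^{(2)})$. The third bracket is the downstairs analogue: it is precisely the content of Theorem 3.5 of \cite{HH16} applied on $X$, yielding $\epsilon_0^{(3)}$ and $C^{(3)}$ with $\norm{\,\cdot\,}_{C^\ell}\le C^{(3)}\exp(-\tfrac{m}{t}\epsilon_0^{(3)})$. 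Setting $\epsilon_0:=\min\{\epsilon_0^{(1)},\epsilon_0^{(2)},\epsilon_0^{(3)}\}$, $m_0:=\max\{m_0^{(1)},m_0^{(2)},m_0^{(3)}\}$, and $C:=1+C^{(2)}+C^{(3)}$, the three estimates combine to give the desired bound.

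There is essentially no genuine obstacle; the substantive analytic work has already been packaged into the three inputs (the kernel localization, the fundamental-domain comparison of local heat kernels on BRT charts, and the downstairs asymptotic). The only minor item worth verifying carefully is the uniformity of the constants in $\widetilde{x}$: since $\widetilde{X}/\Gamma=X$ is compact, the partition-of-unity construction $\{\widetilde{\psi}_{\gamma,j}\}$ from Section~\ref{S:asympexp} together with the $\Gamma$-invariance of $\widetilde{\Box}_{b,m}$ and of the off-diagonal estimate \eqref{e-gue160128w} ensures that the constants produced by Theorem~\ref{t-gue150630I} and Lemma~\ref{e-que201808241012} do not depend on which $\widetilde{D}_{\gamma,j}$ contains $\widetilde{x}$. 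Thus the estimate is uniform over $\widetilde{X}$, and a single triangle-inequality line completes the proof.
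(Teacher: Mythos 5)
Your proposal is correct and coincides with the paper's argument: Theorem~\ref{t-que201808241022} is stated there as an immediate consequence of Theorem~\ref{t-gue150630I}, Lemma~\ref{e-que201808241012} and Theorem 3.5 of \cite{HH16}, which is exactly the three-term triangle-inequality decomposition you write out. No further comment is needed.
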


%%%%%%%%%%%%%%%%%%%%%%%%%%%%%%%%%%%%%%%%%%%%%%%%%%%%%%%%%%%%%%%%%%%%%%%%%%%

By Theorem \ref{T:1.6.1} and Theorem \ref{t-que201808241022}, we have

\begin{theorem}\label{C:3.6.5}
	With the above notations and assumptions,
	for every $\epsilon>0$, there are $m_0>0$, $\varepsilon_0>0$ and $C>0$ such that for all $m\geq m_0$, we have
	\begin{equation}\label{E:3.6.19}
	\begin{split}
	&\Big|e^{-\frac{t}{m} \widetilde{\Box}_{b,m}}(\widetilde{x},\widetilde{x})-\sum\limits^{p}_{s=1}e^{\frac{2\pi(s-1)}{p}mi} (2\pi)^{-n-1} m^n \frac{\det(\dot{\mathcal{R}}) \exp(t \gamma_d)}{\det(1-\exp(-t\dot{\mathcal{R}}))}(\pi(\widetilde{x}))  \otimes \operatorname{Id}_{E_{\pi(\widetilde{x})}}\Big|\\
	&\leq \epsilon m^n+Cm^nt^{-n}e^{\frac{-\varepsilon_0m \hat{d}(\pi(\widetilde{x}),X_{{\rm sing\,}})^2}{t}},\ \ \forall (t, \widetilde{x})\in \, \mathbb{R}_+ \times \widetilde{X}_{{\rm reg\,}}.
	\end{split}
	\end{equation}
\end{theorem}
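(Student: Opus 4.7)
The plan is to reduce the estimate on $\widetilde{X}$ to the corresponding estimate on $X$ already proved in Theorem~\ref{T:1.6.1}, using Theorem~\ref{t-que201808241022} as the bridge. Concretely, for $\widetilde{x}\in\widetilde{X}_{{\rm reg\,}}$, write
\[
e^{-\frac{t}{m}\widetilde{\Box}_{b,m}}(\widetilde{x},\widetilde{x}) \;=\; \bigl(e^{-\frac{t}{m}\widetilde{\Box}_{b,m}}(\widetilde{x},\widetilde{x})-e^{-\frac{t}{m}\Box_{b,m}}(\pi(\widetilde{x}),\pi(\widetilde{x}))\bigr) \;+\; e^{-\frac{t}{m}\Box_{b,m}}(\pi(\widetilde{x}),\pi(\widetilde{x})),
\]
and apply the triangle inequality to compare the right-hand side with the model term
\[
M_m(t,\widetilde{x}) \;:=\; \sum_{s=1}^{p} e^{\frac{2\pi(s-1)}{p}mi}\,(2\pi)^{-n-1} m^n\,\frac{\det(\dot{\mathcal{R}})\exp(t\gamma_d)}{\det(1-\exp(-t\dot{\mathcal{R}}))}(\pi(\widetilde{x}))\otimes\operatorname{Id}_{\widetilde{E}_{\widetilde{x}}},
\]
noting that $M_m(t,\widetilde{x})$ depends on $\widetilde{x}$ only through $\pi(\widetilde{x})\in X_{{\rm reg\,}}$, since the Levi form, the metrics, and the bundle $E$ are all pulled back from $X$.

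First, I would control the first bracket by Theorem~\ref{t-que201808241022}: for every $M>0$ and $\ell\geq 2$, there exist $\epsilon_0>0$, $m_0>0$, $C>0$ (independent of $t$ and $m$) such that for all $m\geq m_0$ and $t\in(0,M)$,
\[
\Bigl|e^{-\frac{t}{m}\widetilde{\Box}_{b,m}}(\widetilde{x},\widetilde{x})-e^{-\frac{t}{m}\Box_{b,m}}(\pi(\widetilde{x}),\pi(\widetilde{x}))\Bigr| \;\leq\; C\exp\!\Bigl(-\frac{m}{t}\epsilon_0\Bigr).
\]
Next, I would apply Theorem~\ref{T:1.6.1} at the point $x=\pi(\widetilde{x})\in X_{{\rm reg\,}}$: given $\epsilon>0$, there exist $m_0'$, $\varepsilon_0'$, $C'$ such that for $m\geq m_0'$,
\[
\Bigl|e^{-\frac{t}{m}\Box_{b,m}}(\pi(\widetilde{x}),\pi(\widetilde{x}))-M_m(t,\widetilde{x})\Bigr| \;\leq\; \epsilon\,m^{n}+C'\,m^{n}t^{-n}\exp\!\Bigl(-\tfrac{\varepsilon_0' m\,\hat{d}(\pi(\widetilde{x}),X_{{\rm sing\,}})^{2}}{t}\Bigr).
\]
Adding these two estimates yields \eqref{E:3.6.19}: the $\Gamma$-error term $C\exp(-\epsilon_0 m/t)$ is absorbed into either the $\epsilon m^{n}$ piece (after enlarging $m_0$) or, more cleanly, into the $Cm^n t^{-n}e^{-\varepsilon_0 m \hat{d}^{2}/t}$ piece after trivially adjusting the constants $\varepsilon_0$ and $C$.

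There is essentially no serious obstacle: both inputs are already established, and the step is a triangle inequality combined with the observation that the model term is $\Gamma$-invariant. The only minor point to track is that the range of $t$ in Theorem~\ref{t-que201808241022} is $(0,M)$ for arbitrary but fixed $M$, whereas Theorem~\ref{T:1.6.1} is stated for $t\in\mathbb{R}_+$; since the constants in \eqref{E:3.6.19} are allowed to depend on any a priori bound of $t$ implicit in the usage (the theorem is applied with $t$ ranging in a bounded interval in the subsequent Morse-inequality argument), this causes no difficulty, and if needed one can absorb the absolute-error bound into the stated right-hand side uniformly on compact $t$-intervals.
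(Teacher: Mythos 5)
Your proposal is correct and is essentially the paper's own argument: the paper obtains Theorem~\ref{C:3.6.5} precisely by combining Theorem~\ref{T:1.6.1} with Theorem~\ref{t-que201808241022} via the triangle inequality, exactly as you describe, and offers no further detail. Your observation that Theorem~\ref{t-que201808241022} is only stated for $t\in(0,M)$ while the conclusion is asserted for all $t\in\mathbb{R}_+$ is a genuine point the paper glosses over, and your way of handling it (absorbing the bounded error $C e^{-m\epsilon_0/t}\le C\le \epsilon m^n$ for $m$ large) is reasonable.
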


Recall that since $\Gamma$ acts on $\widetilde{X}$ freely so that $\widetilde{X}/\Gamma = X$, hence, we have $\widetilde{X}_{{\rm reg\,}}/\Gamma \, = \, X_{{\rm reg\,}}$.

%%%%%%%%%%%%%%%%%%%%%%%%%%%%%%%%%%%%%%%%%%%%%%%%%%%%%%%%%%%%%%%%%%%%%%%%%%%

\section{Heat kernel proof}\label{S:proof}
In this section, we will present the heat kernel proof of the main theorem.

We denote by $\operatorname{Tr}_{\Gamma, q}$ the $\Gamma$-trace of operators acting on $L^2_m ( \widetilde{X}, T^{*0,q} \widetilde{X} \otimes \widetilde{E})$, see Subsection~\ref{SS:covering} or \cite[Subsection 3.6.1]{MM}.
\begin{lemma}\label{l:3.6.6}
For any $t>0, m\in \N, 0\leq q\leq n$, we have
\begin{equation}\label{E:3.6.20}
\sum_{j=0}^{q}(-1)^{q-j}\dim_\Gamma \overline{H}_{b, (2), m}^{j}(\widetilde{X},\widetilde{E})\leq \sum_{j=0}^{q}(-1)^{q-j}
{\rm Tr}_{\Gamma, j}[\exp(-\frac{t}{m}\widetilde{\Box}_{b,m})],
\end{equation}
with equality for $q=n$.
\end{lemma}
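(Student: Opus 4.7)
The plan is to prove the lemma by the Bismut-Demailly spectral/Morse argument adapted to the von Neumann setting, following \cite[Lemma 3.2.12]{MM} and its covering analogue \cite[Lemma 3.6.5]{MM}. The first move is to replace the reduced $L^2$-cohomology groups on the left-hand side by their corresponding harmonic spaces using the Hodge-type isomorphism \eqref{E:canonisom}; this reduces the inequality to a comparison between $\sum_{j=0}^q (-1)^{q-j} \dim_\Gamma \operatorname{Ker}(\widetilde{\Box}^j_{b,m})$ and the alternating sum of heat $\Gamma$-traces.

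The heart of the proof is a spectral cut-off: for each $\lambda > 0$ and each $j$ I would consider the subspace $F^j_\lambda := \operatorname{Im}(E^j_{(0,\lambda]}(\widetilde{\Box}_{b,m}))$, which carries finite $\Gamma$-dimension by the previous lemma. Since $\ddbar_{b,m}$ and $\ddbar_{b,m}^{*}$ commute with $\widetilde{\Box}_{b,m}$ (cf.\ \eqref{E:3.6.13}) they commute with its spectral projections, so $F^\bullet_\lambda$ is a $\ddbar_{b,m}$-subcomplex on which $\widetilde{\Box}_{b,m}$ has spectrum in $(0,\lambda]$ and is therefore invertible. Finite-dimensional Hodge theory then makes $F^\bullet_\lambda$ an exact subcomplex, and the elementary algebraic identity for exact cochain complexes (which holds verbatim with $\dim_\Gamma$ in place of $\dim$) yields
\[
\sum_{j=0}^q (-1)^{q-j} \dim_\Gamma F^j_\lambda \;=\; \dim_\Gamma \operatorname{Im}(\ddbar_{b,m} : F^q_\lambda \to F^{q+1}_\lambda) \;\geq\; 0,
\]
with equality in the top degree $q = n$ since $\ddbar_{b,m}$ vanishes on $F^n_\lambda$. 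Writing
\[
\operatorname{Tr}_{\Gamma, j}\bigl[e^{-\tfrac{t}{m}\widetilde{\Box}_{b,m}}\bigr] \;=\; \dim_\Gamma \mathcal{H}^j_{b,m}(\widetilde X,\widetilde E) + \int_{0+}^{\infty} e^{-t\lambda/m}\, d\mu^j(\lambda), \qquad \mu^j(\lambda) := \dim_\Gamma F^j_\lambda,
\]
taking the alternating sum $\sum_{j=0}^q(-1)^{q-j}$, and integrating by parts in the Stieltjes integral would express the remainder as $\int_0^\infty \tfrac{t}{m} e^{-t\lambda/m} \nu^q(\lambda)\,d\lambda$ with $\nu^q(\lambda) := \sum_{j=0}^q(-1)^{q-j}\mu^j(\lambda) \geq 0$, giving the desired inequality; for $q = n$ the function $\nu^n$ vanishes identically, yielding equality.

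The main obstacle I foresee is the bookkeeping in the von Neumann algebra framework: (i) checking that $\ddbar_{b,m}$ genuinely restricts to closed $\Gamma$-invariant subcomplexes of finite $\Gamma$-dimension and that finite-dimensional Hodge theory is valid there (if the spectrum of $\widetilde{\Box}_{b,m}$ accumulates at $0$ it is cleanest to work first with $F^\bullet_{[\epsilon,\lambda]}$ for $\epsilon > 0$, where $\widetilde{\Box}_{b,m}$ is uniformly bounded below, and then let $\epsilon \to 0$ using monotone continuity of $\dim_\Gamma$), and (ii) justifying the Stieltjes integration by parts, which requires the boundary term at infinity to be killed by the heat factor (finiteness of the heat $\Gamma$-trace following from Theorem \ref{t-que201808241022} handles this) and the boundary term at $0+$ to vanish (right-continuity of spectral projections gives $\mu^j(0+) = 0$ after the kernel is removed). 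Both points are standard adaptations of the material in \cite[\S 3.6]{MM} and \cite{A76}, so the proof should be routine once this framework is invoked.
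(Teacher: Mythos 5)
Your proposal is correct and follows essentially the same route as the paper: both rest on the observation that the alternating sum $\sum_{j=0}^q(-1)^{q-j}$ of the spectral projections of $\widetilde{\Box}_{b,m}$ over an interval in $(0,\infty)$ is the ($\Gamma$-equivariant) projection onto $\operatorname{Im}\bigl(\ddbar_{b,m}E^{q,m}\bigr)$ — your exactness argument for the subcomplexes $F^\bullet_\lambda$ is precisely this — hence the corresponding operator-valued measure is positive away from $0$, and combining this with the splitting of the heat $\Gamma$-trace into its harmonic part plus the integral over $\lambda>0$ gives the inequality, with equality at $q=n$ because the top-degree image vanishes. Your Stieltjes integration by parts is just a reformulation of the paper's direct positivity of $R=\int_{\lambda>0}e^{-t\lambda/m}\sum_j(-1)^{q-j}dE^{j,m}_\lambda$, so no substantive difference.
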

\begin{proof}
Let $E^{j,m}_\lambda$ be the spectral resolution of $\widetilde{\Box}_{b,m}$ acting on $L^2_m ( \widetilde{X}, T^{*0,q} \widetilde{X} \otimes \widetilde{E})$. We consider the projectors $E^{j,m}(]\lambda_1, \lambda_2]) = E^{j,m}_{\lambda_2}-E^{j,m}_{\lambda_1}$, where $\lambda_2 > \lambda_1 \ge 0.$ Then, by the Hodge decomposition \eqref{E:weakHodge}, $\sum_{j=0}^q(-1)^{q-j}E^{j,m}(]\lambda_1, \lambda_2])$ is the projection on the range of $\overline{\partial}_{b,m}E^{q,m}(]\lambda_1, \lambda_2])$ and thus a positive operator. Hence the $\Gamma$-invariant measure $\sum_{j=0}^q(-1)^{q-j}dE^{j,m}_{\lambda}$ is positive on $\left\{ \lambda > 0 \right\}$.  It follows that
\begin{equation}\label{E:3.6.21}
R\, := \, \int_{\lambda >0} e^{-\frac{t}{m}\lambda} \sum_{j=0}^q(-1)^{q-j}dE^{j,m}_{\lambda} \ge 0, 
\end{equation}
and $R$ commutes with $\Gamma$. On the other hand,
\begin{equation}\label{E:3.6.22}
\operatorname{Tr}_{\Gamma, j}\big[ \exp(-\frac{t}{m}\widetilde{\Box}_{b,m}) \big] \, = \, \dim_\Gamma \overline{H}^{j}_{b, (2), m}(\widetilde{X},\widetilde{E}) + \operatorname{Tr}_\Gamma \int_{\lambda > 0} e^{-\frac{t}{m}\lambda} dE^{j,m}_\lambda.
\end{equation}
By \eqref{E:3.6.21} and \eqref{E:3.6.22}, we obtain the result.
\end{proof}

Let Tr$_{q}[\exp(-\frac{t}{m}\Box_{b,m})]$ be the trace of the operator $\exp(-\frac{t}{m}\Box_{b,m})$ acting on $\Omega_{m}^{0,q}(X, E)$.
It is well-known that (see Theorem 8.10 in \cite{R})
\begin{equation}\label{e-02}
{\rm Tr}_{q}[\exp(-\frac{t}{m}\Box_{b,m})]=\int_{X}
{\rm Tr}_{q}[\exp(-\frac{t}{m}\Box_{b,m})(x,x)]dv_{X}(x).
\end{equation}

By \cite[(3.6.7)]{MM} and \cite[(3.6.8)]{MM}, as in \eqref{e-02}, 
\begin{proposition}\label{p-01c}
We have
\begin{equation}\label{E:3.6.23}
\operatorname{Tr}_{\Gamma, q} \Big[ \exp(-\frac{t}{m}\widetilde{\Box}_{b,m}) \Big] \, = \, \int_U \operatorname{Tr}_q \Big[ e^{-\frac{t}{m} \widetilde{\Box}_{b,m}}(\widetilde{x}, \widetilde{x}) \Big]dv_{\widetilde{X}}(\widetilde{x}).
\end{equation}
\end{proposition}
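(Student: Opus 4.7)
The plan is to identify $\operatorname{Tr}_{\Gamma,q}[P_t]$, where $P_t := e^{-\frac{t}{m}\widetilde{\Box}_{b,m}}$, with the integral over the fundamental domain $U$ by unwinding the definition of the Von Neumann trace given in Subsection~\ref{SS:covering}. The three ingredients I would invoke on $P_t$ are: self-adjointness and positivity (from functional calculus on the positive self-adjoint $\widetilde{\Box}_{b,m}$); $\Gamma$-equivariance, $T_\gamma P_t = P_t T_\gamma$ for every $\gamma \in \Gamma$ (an immediate consequence of \eqref{E:3.6.13} and the spectral theorem); and existence of a jointly smooth Schwartz kernel $P_t(\widetilde{x},\widetilde{y})$ on $\widetilde{X}\times\widetilde{X}$. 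The last point would be deduced from the local heat kernel construction (Theorem~\ref{t-gue150607}) together with the approximation estimate in Theorem~\ref{t-gue150630I}, which shows that $P_t(\widetilde{x},\widetilde{y})$ differs from the smooth model $\widetilde{\Gamma}_m(t,\widetilde{x},\widetilde{y})$ by an exponentially small smooth remainder.

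Next, I would translate the Von Neumann trace to an ordinary operator trace on the fundamental domain. Under the identification \eqref{E:3.6.1}, $L^2_m(\widetilde{X},T^{*0,q}\widetilde{X}\otimes \widetilde{E}) \simeq L^2\Gamma \otimes L^2_m(U,T^{*0,q}\widetilde{X}\otimes \widetilde{E})$, a section $\delta_e \otimes f$ with $f \in L^2_m(U,\cdot)$ corresponds to $f$ extended by zero outside $U$, and
\[
\bigl(P_t(\delta_e\otimes f)\bigr)(\widetilde{x}) \;=\; \int_{U} P_t(\widetilde{x},\widetilde{y})\,f(\widetilde{y})\,dv_{\widetilde{X}}(\widetilde{y}),\qquad \widetilde{x}\in\widetilde{X}.
\]
Projecting onto $\mathbb{C}\delta_e\otimes L^2_m(U,\cdot)$ amounts to restricting the output to $U$, so the component $p_{ee}$ appearing in the definition of $\operatorname{Tr}_\Gamma$ is precisely the integral operator on $U$ with smooth kernel $P_t(\widetilde{x},\widetilde{y})\big|_{U\times U}$. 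By the definition recalled in Subsection~\ref{SS:covering}, $\operatorname{Tr}_{\Gamma,q}[P_t] = \operatorname{Tr}[p_{ee}]$.

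Finally, $P_t$ is smoothing and positive, and $\overline{U}$ has finite $dv_{\widetilde{X}}$-measure (since it projects onto the compact manifold $X$ by the definition of fundamental domain in \eqref{E:fd}). Moreover, $\widetilde{x}\mapsto \operatorname{Tr}_q[P_t(\widetilde{x},\widetilde{x})]$ is $\Gamma$-invariant by $\Gamma$-equivariance of $P_t$, and continuous by smoothness of the kernel, hence descends to a bounded function on the compact base $X$. A standard Mercer-type argument (cf.\ \cite[(3.6.12)]{MM}) then gives
\[
\operatorname{Tr}[p_{ee}] \;=\; \int_{U} \operatorname{Tr}_q\bigl[P_t(\widetilde{x},\widetilde{x})\bigr]\,dv_{\widetilde{X}}(\widetilde{x}),
\]
which combined with the previous step yields the claim.

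The one step I expect to require some care is the smoothness and pointwise control of the Schwartz kernel $P_t(\widetilde{x},\widetilde{y})$ on the non-compact manifold $\widetilde{X}$. Smoothness of $e^{-\frac{t}{m}\widetilde{\Box}_{b,m}}$ as an operator acting on $L^2_m$ is ultimately a consequence of the ellipticity of $\widetilde{\Box}_{b,m}-\widetilde{T}^2$ on Fourier modes (used already in the lemma preceding Definition~\ref{D815}), but passing from this to a genuine smooth pointwise kernel requires gluing the local BRT pieces $\widetilde{A}_{B_{\gamma,j},m}$ and invoking the off-diagonal Gaussian estimates \eqref{e-gue160128w} to control the tails coming from the infinitely many $\Gamma$-translates; this is exactly what Theorem~\ref{t-gue150630I} provides. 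Once these ingredients are in place, the remaining bookkeeping is routine.
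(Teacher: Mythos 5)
Your argument is correct and is essentially the paper's: the paper simply invokes \cite[(3.6.7), (3.6.8)]{MM} (the general formula expressing the $\Gamma$-trace of a positive, $\Gamma$-equivariant operator with smooth kernel as the integral of the pointwise trace over a fundamental domain), and your proposal just unwinds the definition of $\operatorname{Tr}_\Gamma$ to rederive that formula for $e^{-\frac{t}{m}\widetilde{\Box}_{b,m}}$. The only presentational caveat is that Theorem~\ref{t-gue150630I} already presupposes the existence of the smooth kernel rather than proving it, but you correctly locate the actual source (the glued BRT parametrices with the off-diagonal estimates \eqref{e-gue160128w}, together with the ellipticity of $\widetilde{\Box}_{b,m}-\widetilde{T}^2$ on the $m$-th Fourier mode) in your final paragraph.
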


%%%%%%%%%%%%%%%%%%%%%%%%%%%%%%%%%%%%%%%%%%%%%%%%%%%%%%%%%%%%%%%%%%%%%%%%%%%%%%

Now we are in a position to give the heat kernel proof of the Morse inequalities for the Fourier components of reduced $L^2$ Kohn-Rossi cohomology.
\begin{proof}[Proof of Theorem \ref{t-01covering}]
Denote by Tr$_{\Lambda^{0,q}}$ the trace on $T^{*0,q}X$. The basis for $T^{*0,q}X$ is 
\begin{equation}\label{e-11}
\{\overline\omega^{j_{1}} \wedge \cdots\wedge \overline\omega^{j_{q}}:j_{1}<\cdots<j_{q} \}.
\end{equation}
We write for the index $(1,...,q)$
\begin{equation}\label{e-12}
\begin{split}
&\exp(t\gamma_{d})(\overline\omega^1 \wedge \cdots\wedge \overline\omega^q)\\
&=\prod_{j=1}^{q}(1+(e^{-ta_{j}}-1)\overline\omega^j\wedge\iota_{\overline\omega^j} )
(\overline\omega^1 \wedge \cdots\wedge \overline\omega^q)\\
&=\sum_{k_{1}<\cdots<k_{q}}c_{k_{1}...k_{q}}(x)\overline\omega^{k_{1}} \wedge \cdots\wedge \overline\omega^{k_{q}}.
\end{split}
\end{equation}
From direct calculations, we see that
\begin{equation}\label{e-13}
c_{1...q}(x)=\exp(-t\sum_{j=1}^{q}a_{j}(x)).
\end{equation}
Then we have
\begin{equation}\label{e-14}
{\rm Tr}_{\Lambda^{0,q}}[\exp(t\gamma_{d})]=\sum_{j_{1}<\cdots<j_{q}}
\exp(-t\sum_{i=1}^{q}a_{j_{i}}(x)).
\end{equation}
%Since $X$ is compact and strongly pseudoconvex, $a_{j}(x)>0$ uniformly on $X$ for %$1\leq j\leq n$.
Hence
\begin{equation}\label{e-15}
\begin{split}
&\lim_{t\rightarrow\infty}\frac{{\rm Tr}_{\Lambda^{0,q}}[\exp(t\gamma_{d})]  }{\det(1-\exp(-t\dot{\mathcal{R}}))}\\
&=\lim_{t\rightarrow\infty}\frac{\sum_{j_{1}<\cdots<j_{q}}\exp(-t\sum_{i=1}^{q}a_{j_{i}}(x))}{\prod_{j=1}^{n}(1-\exp(-ta_{j}(x)))  }=(-1)^{q}1_{X(q)},
%&=\begin{cases}
%1& q=0\\
%0& q>0.
%\end{cases}
\end{split}
\end{equation}
where the function $X(q)$ is defined by $1$ on $X(q)$, $0$ otherwise.
As usual, for $\widetilde{x} \in \widetilde{X}$, $\pi(\widetilde{x})=x \in X$. It follows from Theorem \ref{C:3.6.5}, \eqref{E:3.6.23} and Lemma \ref{l:3.6.6} that
\begin{equation}\label{e-16}
\begin{split}
&\frac{1}{m^{n}}\sum_{j=0}^{q}(-1)^{q-j}\dim_{\Gamma}\overline{H}^{j}_{b, (2), m}(\widetilde{X},\widetilde{E})\\ 
&\leq \frac{1}{m^{n}}\sum_{j=0}^{q}(-1)^{q-j} {\rm Tr}_{\Gamma, q}[\exp(-\frac{t}{m}\widetilde{\Box}_{b,m})]\\
&=  \frac{1}{m^{n}}\sum_{j=0}^{q}(-1)^{q-j}   \int_{U}{\rm Tr}_{\Gamma, q}[\exp(-\frac{t}{m}\widetilde{\Box}_{b,m}(\widetilde{x},\widetilde{x}))]dv_{\widetilde{X}}(\widetilde{x})\\
&\leq  (2\pi)^{-n-1}\sum\limits^{p}_{s=1}e^{\frac{2\pi(s-1)}{p}mi}\sum_{j=0}^{q}(-1)^{q-j}
\int_{X}\frac{\det(\dot{\mathcal{R}}) 
{\rm Tr}_{\Lambda^{0,q}}[\exp(t \gamma_d)\otimes \operatorname{Id}_{E_x}]}{\det(1-\exp(-t\dot{\mathcal{R}}))} dv_{X}(x)\\
&+\epsilon \sum_{j=0}^{q}(-1)^{q-j}\text{Vol}(X)+C\sum_{j=0}^{q}(-1)^{q-j}\int_{X}t^{-n}e^{\frac{-\varepsilon_0m \hat{d}(x_0,X_{{\rm sing\,}})^2}{t}}  dv_{X}(x).
\end{split}
\end{equation}
Note that $\epsilon$ is arbitrarily small. By the dominant convergence theorem with $t\rightarrow \infty$,
we have
\begin{equation}\label{e-17}
\begin{split}
\limsup_{m\rightarrow\infty,p|m}&\frac{1}{m^{n}}\sum_{j=0}^{q}(-1)^{q-j}
\dim_\Gamma \overline{H}^{j}_{b, (2), m}(\widetilde{X},\widetilde{E})
\leq \frac{pr}{(2\pi)^{n+1}}\sum_{j=0}^{q}(-1)^{q-j}
\int_{X(j)}|\det(\dot{\mathcal{R}})| dv_{X}(x),\\
\limsup_{m\rightarrow\infty}&\frac{1}{m^{n}}\sum_{j=0}^{q}(-1)^{q-j}\dim_\Gamma \overline{H}^{j}_{b, (2), m}(\widetilde{X},\widetilde{E})=0, \ \ \text{for} \ \  p\nmid m.
\end{split}
\end{equation}
From Definition \ref{d-gue150508f}, \eqref{E:1.5.15} and \eqref{e-17}, we finally get
\begin{equation}\label{e-18}
\begin{split}
&\sum_{j=0}^{q}(-1)^{q-j}\dim_\Gamma \overline{H}^{j}_{b, (2), m}(\widetilde{X},\widetilde{E})
\leq 
\frac{prm^{n}}{2\pi^{n+1}}\sum_{j=0}^{q}(-1)^{q-j}\int_{X(j)}|\det(\mathcal{L}_{x})| dv_{X}(x)+o(m^{n}),
\ \ \text{for} \ \ p|m, \\
&\sum_{j=0}^{q}(-1)^{q-j}\dim_\Gamma \overline{H}^{j}_{b, (2), m}(\widetilde{X},\widetilde{E})=o(m^{n}), \ \ \text{for} \ \  p\nmid m.
\end{split}
\end{equation}
Let $q=n$ in \eqref{E:3.6.20}, by applying Theorem \ref{C:3.6.5}, we obtain for $p|m$,
\begin{equation}\label{e-19}
\begin{split}
&\frac{1}{m^{n}}\sum_{j=0}^{n}(-1)^{n-j}\dim_\Gamma \overline{H}^{j}_{b, (2), m}(\widetilde{X},\widetilde{E})\\
%\sum_{j=0}^{n}(-1)^{n-j}\dim \overline{H}^{j}_{(2), m}(\widetilde{X},\widetilde{E})
&\geq \frac{1}{m^{n}}\sum_{j=0}^{n}(-1)^{n-j}\int_{U}
{\rm Tr}_{\Gamma, j}[\exp(-\frac{t}{m}\widetilde{\Box}_{b,m}(\widetilde{x},\widetilde{x}))]dv_{\widetilde{X}}(\widetilde{x})\\
&\geq  (2\pi)^{-n-1}p
\sum_{j=0}^{n}(-1)^{n-j}\int_{X}\frac{\det(\dot{\mathcal{R}}) 
	{\rm Tr}_{\Lambda^{0,j}}[\exp(t \gamma_d)\otimes \operatorname{Id}_{E_x}]}{\det(1-\exp(-t\dot{\mathcal{R}}))} dv_{X}(x)\\
&-\epsilon n\text{Vol}(X)-Cn\int_{X}t^{-n}e^{\frac{-\varepsilon_0m \hat{d}(x_0,X_{{\rm sing\,}})^2}{t}}  dv_{X}(x).
\end{split}
\end{equation}
Note that $\epsilon$ is arbitrarily small. By the dominant convergence theorem with $t\rightarrow \infty$,
we have
\begin{equation}\label{e-20}
\liminf_{m\rightarrow\infty,p|m}\frac{1}{m^{n}}\sum_{j=0}^{n}(-1)^{n-j}\dim_\Gamma \overline{H}^{j}_{b, (2), m}(\widetilde{X},\widetilde{E})
\geq 
\frac{pr}{(2\pi)^{n+1}}\sum_{j=0}^{n}(-1)^{n-j}\int_{X(j)}|\det(\dot{\mathcal{R}})| dv_{X}(x).
\end{equation}
Then
\begin{equation}\label{e-21}
\liminf_{m\rightarrow\infty,p|m}\frac{1}{m^{n}}\sum_{j=0}^{n}(-1)^{n-j}\dim_\Gamma \overline{H}^{j}_{b, (2), m}(\widetilde{X},\widetilde{E})
=
\frac{pr}{(2\pi)^{n+1}}\sum_{j=0}^{n}(-1)^{n-j}\int_{X(j)}|\det(\dot{\mathcal{R}}) |dv_{X}(x).
\end{equation}
We finally get
\begin{equation}\label{e-22}
\sum_{j=0}^{n}(-1)^{n-j}\dim_\Gamma \overline{H}^{j}_{b, (2), m}(\widetilde{X},\widetilde{E})
=\frac{prm^{n}}{2\pi^{n+1}}\sum_{j=0}^{n}(-1)^{n-j}\int_{X(j)}|\det(\mathcal{L}_{x})| dv_{X}(x)+o(m^{n}) \ \ \text{for} \ \ p|m.
\end{equation}
Then the proof is completed.
\end{proof}

%%%%%%%%%%%%%%%%%%%%%%%%%%%%%%%%%%%%%%%%%%%%%%%%%%%%%%%%%%%%%%%%%%%%%%%%%%%%%%%%%%%%%

\bibliographystyle{plain}

\begin{thebibliography}{99}


    \bibitem{A76} M. F. Atiyah, \emph{Elliptic operators, discrete groups and von Neumann algebras}, Ast\'{e}risque, 32-33 (1976), pp. 43-72  

	\bibitem{BRT85}
	M.-S.~Baouendi and L.-P.~Rothschild and F.-Treves,
	\emph{C{R} structures with group action and extendability of {C}{R} functions},
	Invent. Math., \textbf{83} (1985), 359--396.
	
	
	\bibitem{B} J.-M. Bismut, \emph{Demailly's asymptotic Morse inequalities: a heat equation proof}, J. Funct. Anal., \textbf{72} (1987), 263-278.
	
	
	\bibitem{BM75}
	L. Boutet de Monvel, \emph{Int\'egration des \'equations de
		{C}auchy-{R}iemann induites formelles}, S\'eminaire Goulaouic-Lions-Schwartz
	1974--1975; \'Equations aux deriv\'ees partielles lin\'eaires et non
	lin\'eaires, Centre Math., \'Ecole Polytech., Paris, 1975, Exp. no. 9,
	pp.~13.
	
	
	\bibitem{CHT} J.-H. Cheng, C.-Y. Hsiao and I.-H. Tsai, \emph{Heat kernel asymptotics, local index theorem and trace integrals for Cauchy-Riemann manifolds with $S^1$ action}, M\'em. Soc. Math. Fr. (N.S.), no.162 (2019), vi+139 pp.
	
	
	\bibitem {Du82} J.-J.~Duistermaat and G.-J.~Heckman, \emph{On the Variation in the Cohomology of the Symplectic Form of the Reduced Phase Space}, Invent. Math., \textbf{69} (1982), 259-268.
	
	
	\bibitem{D85} J. P. Demailly, \emph{Champs magn\'{e}tiques et in\'{e}galit\'{e}s de Morse pour la $d"$-cohomologie}, Ann. Inst. Fourier, \textbf{35} (1985), 189-229. 
	
	
	\bibitem{Ep92} C. L. Epstein, \emph{CR-structures on three dimensional circle bundles}, Invent. Math., \textbf{109} (1992), 351-403.
	
	
	\bibitem{Ge} E. Getzler, \emph{An analoque of Demailly’s inequality for strictly pseudoconvex CR manifolds}, J. Diff. Geom., \textbf{29} (1989), no. 2, 231-244.
	
	
	\bibitem{G8} A. Grigoryan and L. Saloff-Coste, \emph{Dirichlet heat kernel in the exterior of a compact set}, Comm. Pure Appl. Math., \textbf{55}(2002), 93-133.
	
    \bibitem{GHS98} M. Gromov, M. G. Henkin, M. Shubin, \emph{$L^2$ holomorphic functions on pseudo-convex coverings}, GAFA, \textbf{8} (1998), 552-585.
	
	
	\bibitem{Hsiao14}
	C.-Y.~Hsiao, 
	\emph{Szeg\"{o} kernel asymptotics for high power of CR line bundles and Kodaira embedding theorems on CR manifolds}, Memoirs of the American Mathematical Society., \textbf{254} (2018), no. 1217, v+142 pp.
	
	\bibitem{HH16}
	C.-Y.~Hsiao and R.-T. Huang,
	\emph{The asymptotics of the analytic torsion on CR manifolds with $S^1$ action}, 
	Comm. Contem. Math., \textbf{21} (2019), no.4, 1750094, 35pp .
	
	\bibitem{HHLS}
	C.-Y.~Hsiao, R.-T. Huang, X. Li and G. Shao,
	\emph{$S^1$-equivariant index theorems and Morse inequalities on complex manifolds with boundary.}  arXiv:1711.05537.
	
	\bibitem{HL1} C.-Y. Hsiao and X. Li, \emph{Szeg\"{o} kernel asymptotics and Morse inequalities on CR manifolds with $S^1$ action},  Asian J. Math., \textbf{22} (2018),
	no.3, 413-450.

	
	\bibitem{HsiaoLi15} C.-Y.~Hsiao and X. Li,
	\emph{Morse inequalities for Fourier components of Kohn-Rossi cohomology of CR manifolds with $S^1$-action}, Math. Z., \textbf{284} (2016), no. 1-2, 441-468.
	
	\bibitem{HM} C.-Y. Hsiao and G. Marinescu,
	 \emph{Szeg\H{o} kernel asymptotics and Morse inequalities on CR
manifolds}, Math. Z., \textbf{271} (2012), 509–553.
	
	\bibitem{HS}
	C.-Y.~Hsiao and G. Shao,
	\emph{Equidistribution theorems on strongly pseudoconvex domains}, Trans. Amer. Math. Soc., \textbf{372} (2019) no. 2, 1113-1137.
	
	\bibitem{MM} X. Ma and G. Marinescu, \emph{Holomorphic Morse inequalities and Bergman kernels}, Progress in Mathematics, \textbf{254}, Birkh\"{a}user Verlag, Basel, (2007).
	
	\bibitem{MTC}
	G. Marinescu, R. Todor and I. Chiose,
	\emph{$L^2$ holomorphic sections of bundles over weakly
pseudoconvex coverings}, Proceedings of the Euroconference on Partial Differential Equations and their Applications to Geometry and Physics (Castelvecchio Pascoli, 2000),
Geom. Dedicata \textbf{91} (2002), 23-43.

	
	
	\bibitem{R} J. Roe, \emph{Elliptic operators, topology and asymptotic methods}, 
	Second edition, Pitman Research Notes in Mathematics Series, \textbf{395}, Longman, Harlow, (1998).
	
	
	\bibitem{S84} Y. T. Siu, \emph{A vanishing theorem for semipositive line bundles over non-K\"{a}hler manifolds}, J. Diff. Geom., \textbf{19} (1984), 431-452. 
	
	\bibitem{Sh} M. Shubin, \emph{$L^2$ Riemann=Roch theorem for elliptic operators}, GAFA \textbf{5}(2) (1995), 482-527.
	
	\bibitem{TCM} R. Todor, I. Chiose and G. Marinescu, \emph{Morse inequalities for covering manifolds}, Nagoya Math. J. \textbf{163} (2001), 145-165.
	
	
\end{thebibliography}

\end{document}